\newcommand{\NEW}[1]{\begingroup\color{black}#1\endgroup}
\newtheorem{thm}{Theorem}
\newtheorem{defi}[thm]{Definition}
\newtheorem{lemma}[thm]{Lemma}
\newtheorem{cor}[thm]{Corollary}
\DeclareRobustCommand{\Ro}{\overset{0}{\sim}}
\DeclareRobustCommand{\Rl}{\overset{1}{\sim}}
\DeclareRobustCommand{\Rk}{\overset{k}{\sim}}
\DeclareRobustCommand{\Rt}{\overset{2}{\sim}}
\DeclareRobustCommand{\Rtd}{\overset{2}{\rightharpoonup}}
\DeclareRobustCommand{\Rkd}{\overset{k}{\rightharpoonup}}
\DeclareRobustCommand{\lca}[1]{\mathop{lca}(#1)}
\newcommand{\rthin}{\mathrel{\mathrel{\ooalign{\hss\raisebox{-0.17ex}{$\sim$}\hss\cr\hss\raisebox{0.720ex}{\scalebox{0.75}{$\bullet$}}\hss}}}}
\begin{document}


\begin{frontmatter}

\title{Exact-$2$-Relation Graphs}

\author[WUH]{Yangjing Long}
\ead{yangjing@mail.ccnu.edu.cn}

\author[LEI,MIS,VIE,UNAC,SFI]{Peter F. Stadler}
\ead{stadler@bioinf.uni-leipzig.de}

\address[WUH]{School of Mathematics and Statistics, Center China Normal
  University, No. 152, Luoyu Road, Wuhan, Hubei, P.\ R.\ China}

\address[LEI]{Bioinformatics Group, Department of Computer Science;
  Interdisciplinary Center for Bioinformatics; German Centre for
  Integrative Biodiversity Research (iDiv) Halle-Jena-Leipzig; Competence
  Center for Scalable Data Services and Solutions Dresden-Leipzig; Leipzig
  Research Center for Civilization Diseases; and Centre for Biotechnology
  and Biomedicine, University of Leipzig, H{\"a}rtelstra{\ss}e 16-18,
  D-04107 Leipzig, Germany}

\address[MIS]{Max Planck Institute for Mathematics in the Sciences,
  Inselstra{\ss}e 22, D-04103 Leipzig, Germany}

\address[VIE]{Institute for Theoretical Chemistry, University of Vienna,
  W{\"a}hringerstra{\ss}e 17, A-1090 Wien, Austria}

\address[UNAC]{Facultad de Ciencias, Universidad Nacional de Colombia,
  Sede Bogot{\'a}, Colombia}

\address[SFI]{The Santa Fe Institute, 1399 Hyde Park Rd., Santa Fe, NM
  87501, United States}

\begin{abstract}
  Pairwise compatibility graphs (PCGs) with non-negative integer edge
  weights recently have been used to describe rare evolutionary events and
  scenarios with horizontal gene transfer. Here we consider the case that
  vertices are separated by exactly two discrete events: Given a tree $T$
  with leaf set $L$ and edge-weights $\lambda: E(T)\to\mathbb{N}_0$, the
  non-negative integer pairwise compatibility graph
  $\textrm{nniPCG}(T,\lambda,2,2)$ has vertex set $L$ and $xy$ is an edge
  whenever the sum of the non-negative integer weights along the unique
  path from $x$ to $y$ in $T$ equals $2$.  A graph $G$ has a representation
  as $\textrm{nniPCG}(T,\lambda,2,2)$ if and only if its point-determining
  quotient $G/\!\rthin$ is a block graph, where two vertices are in relation
  $\rthin$ if they have the same neighborhood in $G$. If $G$ is of this
  type, a labeled tree $(T,\lambda)$ explaining $G$ can be constructed
  efficiently.  In addition, we consider an oriented version of this class
  of \NEW{graphs}.
\end{abstract}

\begin{keyword}
Pairwise compatibility graphs \sep
edge-labeled trees \sep 
thin graphs \sep 
block graphs \sep
oriented graphs

\MSC 05C75 \sep 05C05 \sep 92B10
\end{keyword}

\end{frontmatter}

\section{Introduction}

Consider a tree $T$ with leaf set $V$ and a non-negative edge-weight
function $\ell:E(T)\to\mathbb{R}^+_0$. Denote by $\mathcal{P}(x,y)$ the 
unique path between $x$ and $y$ in $T$. The canonical distance function  
$d_{T,\ell}:V\times V\to\mathbb{R}^+_0$ is then defined by 
\begin{equation}
  d_{T,\ell}(x,y) = \sum_{e\in\mathcal{P}(x,y)} \ell(e)
  \label{eq:additiveM}
\end{equation} 
This definition is the starting point for mathematical phylogenetics, which
is centered around finite additive metric spaces and their generalizations
\cite{Semple:03,Dress:12}. It also serves as a basis for defining a large
class of graphs that in the recent past has received considerable
attention. The \emph{pairwise compatibility graphs}
$G=\mathrm{PCG}(T,\ell,d_{\min},d_{\max})$ has vertex set $V$ and edges
\begin{equation} 
  uv \in E(G) \quad\textrm{if and only if}\qquad 
  d_{\min}\leq d_{T,w}(u,v) \leq d_{\max}
\end{equation}
Originally introduced in the context of phylogenetics \cite{Kearney:03},
they have received considerable interest in the last years, see
\cite{Yanhaona:09,PCGsurvey} and the references therein, as well as
\cite{Hossain:17,Baiocchi:18,Baiocchi:19}. A further generalization of
``multi-interval'' PCGs in explored in \cite{Ahmed:17}.  In the setting of
PCGs and most phylogenetic applications one usually stipulates that
$\ell(e)>0$, measuring e.g.\ the time between two distinct events
associated with adjacent vertices of $T$. A class of graphs that is
conceptually closely are the exact $k$-leaf powers \cite{Brandstaedt:10},
for which $\lambda(e)=1$ for all edges of $T$ and $d_{\min}=d_{\max}=k$.

In an alternative interpretation, $\ell(e)$ models the number of
\emph{discrete} evolutionary events along an edge of $e$ of $T$.  This is
of interest in particular in the context of so-called \emph{rare genomic
  changes} such as the gene or loss of a particular gene of gene family or
a particular genomic rearragement \cite{Rokas:00a}. Some of these convey
phylogenetic information that is (nearly) free of homoplasy, i.e., the
independent occurrance in independent lineages. \NEW{Examples of such rare
  events are the emergence of novel microRNA families \cite{Tarver:13} or
  rearrangements of the genomic gene order \cite{Luo:12}. Since such events
  are very unlikely to have occurred more than once in the same manner,
  they identify phylogenetic groupings that share such an innovation with
  very little ambiguity. This provides information to resolve also parts
  the phylogenetic tree where classical, sequence-based methods fail
  \cite{Waegele:DMP}.}  In this context it is necessary to allow
$\ell(e)=0$ because the events of interest are by definition so rare that
not all taxa will be distinguished by them. In the same vein it important
that events are discrete and hence an integer-valued weight function
$\lambda:E(T)\to\mathbb{N}_0$. Both conditions on $\ell$ cause subtle but
important differences in comparison with the usual definition of PCGs that
requires non-zero edge length but otherwise allows arbitrary real
values. We will denote these ``non-negative integer pairwise compatibility
graph'' by $\textrm{nniPCG}$ to distinguish them from the better studied
class of PCGs with non-zero real-valued edge weights $\lambda$.

The special case in which two leaves $x$ and $y$ of $T$ are separated by a
single event, corresponding to graphs of the form
$\mathrm{nniPCG}(T,\lambda,1,1)$, was explored in \cite{Hellmuth:17q} as a
model of rare events in evolution. It turns out that this graph class
coincides with the forests. The graphs
$\mathrm{nniPCG}(T,\lambda,1,\infty)$ requiring at least one event along
the path between two leaves also have a very simple structure: they are
exactly the complete multipartite graphs \cite{Hellmuth:18x}.

Considering a rooted tree $\vec{T}$ instead of an unrooted tree $T$, it is
natural to consider the digraphs with edges $(x,y)$ whenever a certain
number of events occured between the last common ancestor $\lca{x,y}$ and
$y$. This construction appears naturally in the context of horizontal gene
transfer (HGT), where one asks whether $\lca{x,y}$ and $y$ are separated by
at least one HGT event\footnote{HGT refer to the import of gene from an
  unrelated species e.g.\ through an infection, ingestion, acquisition via
  a plasmid.} This gives rise to the class of Fitch graphs
\cite{Geiss:17a}, which form a subclass of di-cographs introduced by
\cite{CP-06}. Their underlying undirected graphs are exactly the
$\mathrm{nniPCG}(T,\lambda,1,\infty)$, i.e., the complete multipartite
graphs.

A related construction requires a certain number of events between
$\lca{x,y}$ and $y$ \emph{and} excludes all events between $\lca{x,y}$ and
$x$. This class of graphs appears naturally when events are directed, i.e.,
when it is (in general) no possible to revert the effect of an operation in
a single step. Probably the best studied type of single genomic events of
this type are so-called Tandem-Duplication-Random-Loss events, during which
a genomic interval is duplicated and then one of the two copies of each
gene is lost at random \cite{Chaudhuri:06}. The antisymmetric digraphs
obtained by single events are characterized in \cite{Hellmuth:17q}.

Our interest in the graphs $\mathrm{nniPCG}(T,\lambda,k,k)$ for general
$k\ge 1$ also stems from rare-event phylogenetic data. Since we assume an
underlying tree, the distance matrix $d_T$ is additive and its entries are
small non-negative integers. The fact that all edge lengths $\ell(e)$ are
also integers of course imposes additional constraints. As demonstrated
e.g.\ in the context of orthology assignment (a related problem with vertex
labeled trees for which the corresponding graphs turn out to be cographs),
graph editing can be employed to correct empirically estimated input graphs
\cite{Hellmuth:13a}. This approach requires, however, that constraint on
the graphs that can appear are known. In the case of rare-event
phylogenetics, we know that the graph with edge set $\{xy|d_T(x,y)=k\}$
must be a $\mathrm{nniPCG}(T,\lambda,k,k)$. In the rare-event scenario, the
number of pairs of nodes with $d_T(x,y)=k$ will quickly decrease with $k$,
so that the empirical input graphs will have few edges for larger values of
$k$ and thus rarely reveal obstructions.  Hence only small value of $k$ are
of practical value for detecting measurement errors in the data.  Since the
$\mathrm{nniPCG}(T,\lambda,1,1)$ are \NEW{forests}, the corresponding graph
editing problem amounts to identifying spanning \NEW{forests}, and possible
false positive events are edges in cycles. False negatives are not
detectable for $k=1$ since there are no non-tree graphs that would become
trees by inserting edges. They could be detected, however, as missing edges
in the empirical graph for $k=2$ compared to the most similar member of
$\mathrm{nniPCG}(T,\lambda,2,2)$. In this contribution, therefore, we are
interested in the characterization of the graphs
$\mathrm{nniPCG}(T,\lambda,2,2)$, in which edges correspond to exactly two
events between two leaves. \NEW{This graph class is very different from the
  exact-2-leaf power graphs, which are known to coincide with the disjoint
  unions of cliques \cite{Nishimura:02,Brandstaedt:10}.  In contrast, we
  shall see below that e.g.\ every path also has a representation as
  $\mathrm{nniPCG}(T,\lambda,2,2)$.}

This contribution is organized as follows: We first consider a few general
properties of the slightly more general \emph{exactly-$k$-relation} $\Rk$
before investigating for some small graphs and simple graph families
whether they can be respresented with respect to the
\emph{exactly-$2$-relation} $\Rt$ on the leaf set of some tree. Here, we
consider the case that $d_T(x,y)>0$, i.e., that all leaves are separated by
at least one event, and then relax this constraint and characterize the
entire graph class $\mathrm{nniPCG}(T,\lambda,2,2)$ for non-negative,
integer $\lambda$. Our main result is that these graphs are those whose
quotient with respect to the \NEW{false twin (R-thinness)} relation is a
block graph. We then consider the oriented version of the problem and give
characterization in terms of forbidden subgraphs.

\section{Simple Properties of the Exactly-$k$-Relation}
\label{sect:general} 

We shall see that the restriction to integer edge weights on the one hand,
and the admission of zero-weights on the other hand, make the graphs
$\mathrm{nniPCG}(T,\lambda,k,k)$ quite different from the exact-$k$-leaf
power graphs studies systematically in \cite{Brandstaedt:10}. \NEW{While it
  is true that every $\mathrm{PCG}(T,\lambda,d_{\min},d_{\max})$ with
  non-negative real weigths $\lambda$ and bounds $d_{\min}$ and $d_{\max}$
  also has a representation as
  $\mathrm{PCG}(T,\hat\lambda,\hat d_{\min},\hat d_{\max})$ with integer
  weights and bounds \cite[Lemma 2]{Calamoneri:13}, the restriction to
  integer weights clearly affects the definition of graph
  \emph{classes}. For instance, the PCG class with rational weights and
  bounds $d_{\min}=d_{\max}=1$ contains the
  $\mathrm{nniPCG}(T,\lambda,k,k)$ for all $k\in\mathbb{N}$.} Throughout
this contribution we use a notation that is inspired by related work in
mathematical phylogenetics.

\begin{defi} 
  Let $(T,\lambda)$ be an unrooted tree with leaf set $L$ and edge-labeling
  function $\lambda: E(T)\to\mathbb{N}_0$. For $x,y\in L$ we consider the
  \emph{exactly-$k$-relation $\Rk$} defined by \emph{$x \Rk y$} if the
  (unique) path $\mathbb(x,y)$ from $x$ to $y$ in $T$ satisfies
  $\sum_{e\in\mathcal{P}(x,y)} \lambda(e)=k$.  \newline Furthermore, we say
  $(T,\lambda)$ \emph{explains} a graph $G(L,E)$ \emph{(with respect to the
    exactly-$k$-relation)} if $\{x,y\}\in E$ if and only if $x\Rk y$.
  \label{def:exact-k}
\end{defi}

We consider unrooted instead of rooted tree since the distances $d_T(x,y)$
and thus the \emph{exactly-$k$-relation $\Rk$} contains no information on
position of root. In fact, it is well known
\cite{Buneman:71,SimoesPereira:69} that a metric $d$ of the form
(\ref{eq:additiveM}) uniquely defines an unrooted tree. Therefore, one can
only hope to reconstruct the unrooted tree $T$.

\begin{figure}
  \begin{center}
    \includegraphics[width=0.9\textwidth]{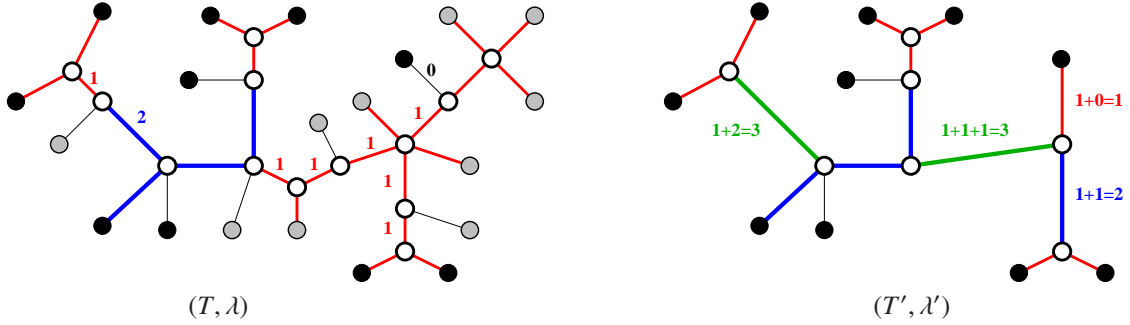}
    \par
    $(T,\lambda)$\hspace*{0.5\textwidth}$(T',\lambda')$
    \par
  \end{center}
  \caption{\NEW{Illustration of Definition~\ref{def:displayed}. The
      edge-labeled tree $(T',\lambda')$ on the r.h.s.\ is \emph{displayed}
      by $(T,\lambda)$. It is obtained as the restriction of $T$ to the
      non-gray vertices.}  Correponding vertices are shown in matching
    locations. All edges $e$ that are ``merged'' into single edges have
    their weights annotated.  Edges that remain unchanged or deleted are
    only shown in color (black for $\lambda(e)=0$, red for $\lambda(e)=2$,
    and blue of $\lambda(e)=2$) without displaying the weight explicitly.}
    \label{fig:displayed}
\end{figure}

\begin{defi}
  The edge-labeled tree $(T,\lambda)$ displays the edge labeled tree
  $(T',\lambda')$ if $(T',\lambda')$ can be obtained from $(T,\lambda)$ by
  first removing every edge and vertex from $(T,\lambda)$ that is not
  contained in a path connecting two leaves of $T'$, and then contracting
  every path $\mathcal{P}(u,v)$ in the remainder of $T$ that has only
  interior vertices of degree $2$ by a single edge $e'$ in $T'$ with label
  $\lambda'(e') = \sum_{e\in\mathcal{P}(u,v)} \lambda(e)$.
  \label{def:displayed}
\end{defi}
In particular, therefore it is sufficient to consider phylogenetic trees,
that is, trees $T$ in which every interior node $x\in V(T)\setminus L$ has
degree at least $3$. Fig.~\ref{fig:displayed} gives an example.
A simple, but important consequence of Definition \ref{def:exact-k} is the
following
\begin{lemma} 
  If $(T,\lambda)$ displays $(T',\lambda')$, $(T,\lambda)$ explains 
  $G(L,\Rk)$ and $(T',\lambda')$ explains $G'(L',\Rk)$.\\ Then
  $G'(L',\Rk)=G(L,\Rk)[L']$, the subgraph of $G(L,\Rk)$ induced by $L'$.
  \label{lem:hered}
\end{lemma}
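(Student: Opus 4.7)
The plan is to reduce the statement to the single observation that, for any two leaves $x,y\in L'$, the weighted distance $d_{T',\lambda'}(x,y)$ equals $d_{T,\lambda}(x,y)$. Once this is established, the equivalence $x \Rk y$ in $(T',\lambda')$ iff $x \Rk y$ in $(T,\lambda)$ follows directly from Definition~\ref{def:exact-k}, and this is exactly the assertion that $G'(L',\Rk)$ equals the induced subgraph $G(L,\Rk)[L']$.

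To verify the distance-preservation claim, I would unpack Definition~\ref{def:displayed} in two steps. First, after removing every edge and vertex not lying on a path connecting two leaves of $T'$, the unique $(x,y)$-path $\mathcal{P}_T(x,y)$ of $T$ is entirely retained, because $x,y\in L'$ and every vertex on this path lies on a path between two leaves of $T'$ (namely $x$ and $y$ themselves). Hence no edge of $\mathcal{P}_T(x,y)$ is deleted in the pruning stage. Second, the subsequent contractions replace maximal degree-$2$ paths $\mathcal{P}(u,v)$ by single edges whose $\lambda'$-label is the sum $\sum_{e\in\mathcal{P}(u,v)}\lambda(e)$. Thus every edge of the $T'$-path from $x$ to $y$ corresponds to a subpath of $\mathcal{P}_T(x,y)$ whose $\lambda$-weights add to the $\lambda'$-weight of that edge, and these subpaths partition $\mathcal{P}_T(x,y)$. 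Summing over all edges of $\mathcal{P}_{T'}(x,y)$ yields $d_{T',\lambda'}(x,y)=d_{T,\lambda}(x,y)$.

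With the distance identity in hand, the conclusion is immediate: $\{x,y\}\in E(G')$ iff $d_{T',\lambda'}(x,y)=k$ iff $d_{T,\lambda}(x,y)=k$ iff $\{x,y\}\in E(G)$, and since both $x,y\in L'$, the latter is the same as $\{x,y\}\in E(G[L'])$.

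The only mildly delicate step is the bookkeeping in the pruning stage, i.e.\ verifying that $\mathcal{P}_T(x,y)$ survives intact; but this is forced by the fact that $x,y$ themselves are leaves of $T'$, so every vertex on $\mathcal{P}_T(x,y)$ sits on a path between two leaves of $T'$. I do not anticipate any serious obstacle beyond careful invocation of Definition~\ref{def:displayed}; the argument is essentially the standard ``distances are preserved under topological contraction of degree-$2$ vertices'' fact, specialized to the present edge-weighted setting.
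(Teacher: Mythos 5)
Your proposal is correct and follows essentially the same route as the paper's own proof: both reduce the lemma to the observation that $d_{T',\lambda'}(x,y)=d_{T,\lambda}(x,y)$ for all $x,y\in L'$ and then read off the equivalence of $\Rk$ in the two trees from Definition~\ref{def:exact-k}. Your version merely spells out the pruning/contraction bookkeeping that the paper leaves implicit.
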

\begin{proof}
  If $(T,\lambda)$ displays $(T',\lambda')$ then
  $\sum_{e\in\mathcal{P}_{T}(u,v)} \lambda(e)=
  \sum_{e\in\mathcal{P}_{T}(u,v)} \lambda'(e)$ for all $u,v\in
  L(T')\subseteq L(T)$, and thus we conclude that for all $u,v\in \in
  L(T')$, we have $u \Rk_{(T',\lambda')} v$ if and only if $u
  \Rk_{(T',\lambda')} v$, i.e., $G'(L',\Rk)$ is the subgraph of 
  $G(L,\Rk)$ induced by $L'$. 
\end{proof}
It follows that ``being explained with respect to the
exactly-$k$-relation'' is a hereditary graph property for all $k$.

We also note the following immediate consequence of the definition.
\begin{lemma} \label{lem:1tok} If $(T,\lambda)$ explains $G$ with respect
  to $\Rl$, \NEW{t}hen $(T,k\lambda)$ explains $G$ with respect to $\Rk$.
\label{lem:multiple}
\end{lemma}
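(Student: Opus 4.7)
The plan is to unfold both sides of Definition~\ref{def:exact-k} and reduce the defining condition of $\Rk$ in $(T,k\lambda)$ to the defining condition of $\Rl$ in $(T,\lambda)$, using in an essential way that $\lambda$ takes values in $\mathbb{N}_0$. Concretely, I would fix two leaves $u,v\in L$ and set $m:=\sum_{e\in\mathcal{P}(u,v)} \lambda(e)\in\mathbb{N}_0$. By the definition of the rescaled labelling $e\mapsto k\lambda(e)$, the weighted path sum in $(T,k\lambda)$ is
\begin{equation*}
  \sum_{e\in\mathcal{P}(u,v)} k\lambda(e) \;=\; k\sum_{e\in\mathcal{P}(u,v)}\lambda(e) \;=\; km.
\end{equation*}
Hence $u\Rk v$ in $(T,k\lambda)$ holds if and only if $km=k$; since $k\ge 1$ and $m\in\mathbb{N}_0$, this is equivalent to $m=1$, i.e.\ to $u\Rl v$ in $(T,\lambda)$.

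Combining this equivalence with the hypothesis that $(T,\lambda)$ explains $G$ with respect to $\Rl$ gives, for every pair $u,v\in L$, that $\{u,v\}\in E(G)$ iff $u\Rl v$ in $(T,\lambda)$ iff $u\Rk v$ in $(T,k\lambda)$. Thus $(T,k\lambda)$ explains $G$ with respect to $\Rk$, which is the claim.

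I do not expect any real obstacle; the argument is essentially a one-line observation. The only point worth making explicit in the writeup is that integrality of $\lambda$ is genuinely used in the implication $km=k\Rightarrow m=1$: without it, fractional path sums could also satisfy $km=k$, so that rescaling edge weights by $k$ could introduce spurious $\Rk$-related pairs that were not $\Rl$-related. This is precisely the phenomenon, already highlighted in the remarks preceding Definition~\ref{def:exact-k}, that distinguishes the $\textrm{nniPCG}$ setting from real-weighted PCGs, and flagging it here keeps the scope of the lemma transparent.
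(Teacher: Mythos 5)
Your argument is correct and is exactly the computation the paper has in mind: the paper states this lemma as an ``immediate consequence of the definition'' and gives no proof, and your unfolding of Definition~\ref{def:exact-k} via $\sum_{e\in\mathcal{P}(u,v)}k\lambda(e)=km$ is the intended one-line justification. One small correction to your closing remark: integrality of $\lambda$ is \emph{not} actually used in the step $km=k\Rightarrow m=1$, since for any $k\ge 1$ (indeed any nonzero real $k$) the equation $km=k$ already forces $m=1$ over the reals, so no fractional path sum could slip through; the integrality of $\lambda$ matters elsewhere in the paper (for the structure of the graph classes), but this particular lemma would hold verbatim for real-valued weight functions as well.
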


\begin{lemma} 
  Let $G$ be a graph with connected components $G_i$, $i=1,\dots,N$. Then
  there is an edge-labeled tree $(T,\lambda)$ explaining $T$ with respect
  to $\Rk$ if and only if there are edge labeled trees $(T_i,\lambda_i)$
  explaining $G_i$ for all $i=1,\dots,N$.
  \label{lem:connected}
\end{lemma}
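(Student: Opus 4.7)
For the forward direction, I would use Lemma~\ref{lem:hered} directly. Given a tree $(T, \lambda)$ explaining $G$ with respect to $\Rk$, let $L_i \subseteq L$ be the vertex set of component $G_i$. The ``displayed'' construction of Definition~\ref{def:displayed} applied to $L_i$ yields a tree $(T_i, \lambda_i)$ which, by Lemma~\ref{lem:hered}, explains the induced subgraph $G[L_i]$. Since $G_i$ is a connected component, no edge of $G$ has exactly one endpoint in $L_i$, so $G[L_i]$ coincides with $G_i$, giving the required representation.

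The converse requires an explicit construction. My plan is to amalgamate the trees $(T_i, \lambda_i)$ by attaching all of them to a single new central vertex $r$ via ``long'' edges of weight $k+1$. Concretely, for each $i$ I would pick a non-leaf attachment vertex $v_i \in V(T_i)$ and add an edge $\{r, v_i\}$ of weight $k+1$ to obtain $(T, \lambda)$. For $x \in L_i$ and $y \in L_j$ with $i \ne j$, the unique $x$--$y$ path in $T$ traverses both edges at $r$ and hence has weight $d_{T_i}(x, v_i) + 2(k+1) + d_{T_j}(v_j, y) \ge 2(k+1) > k$, so $x$ and $y$ are not $\Rk$-related; while distances within each $L_i$ are unchanged, preserving exactly the edges of $G_i$.

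The main obstacle is ensuring that the vertices of $L_i$ remain leaves of $T$ while still having an internal attachment vertex $v_i$ available; this fails precisely when $T_i$ has no internal vertex. If $|L_i|\ge 3$, then $T_i$ certainly has an interior vertex and no adjustment is needed. If $|L_i| = 1$ with $L_i = \{x_i\}$, I would set $v_i = x_i$ and simply add the edge $\{r,x_i\}$ of weight $k+1$; then $x_i$ has degree one in $T$ and remains a leaf. If $|L_i| = 2$ with $T_i$ a single edge of weight $k$, I would subdivide that edge by inserting a new vertex $v_i$ and splitting the weight arbitrarily as $k = a + b$ with $a, b \in \mathbb{N}_0$; this preserves $d_{T_i}(x,y) = k$ and provides the internal attachment vertex. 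Once these edge-case adjustments are made, correctness of the construction reduces to the distance calculation of the previous paragraph, and the integer, non-negative nature of all new weights is immediate.
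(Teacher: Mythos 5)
Your proposal is correct and follows essentially the same strategy as the paper: necessity via heredity (Lemma~\ref{lem:hered}), and sufficiency by joining the component trees at interior attachment vertices with edges of weight $k+1$, subdividing single-edge components to create such vertices. The only cosmetic difference is that you join everything to one new central vertex whereas the paper arranges the subtrees as an arbitrary tree; your write-up is in fact slightly more complete, since the paper's verification stops after checking the cross-component case.
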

\begin{proof} 
  The condition is necessary because of heredity. In order to see
  sufficiency, we can construct $(T,\lambda)$ from the disjoint union of
  the $(T_i,\lambda_i)$ in the following way: first we arrange them as an
  arbitrary tree $\mathcal{T}$. Then we replace each $(K_2,\lambda(e)=k)$
  by $S_2$ with the two edges $e'$ and $e''$ labeled such that
  $\lambda(e')+\lambda(e'')=k$. Now choose for each tree $T_i$ an arbitrary
  inner vertex $x_i$ in $T_i\ne K_1$ and the unique vertex $x_i$ if
  $T_i=K_1$. Finally, we connect $x_i$ and $x_j$ by an edge $e_{ij}$ with
  $\lambda(e_{ij})=k+1$ if and only if $T_i$ and $T_j$ are adjacent in
  $\mathcal{T}$. To verify that $(T,\lambda)$ indeed explains $G$ we
  observe: (i) If $x$ and $y$ are leafs from different connected components
  of $G$, they are located in different subtrees $T_i$ and thus the path
  connecting them contains one of the edges label $k+1$, thus $x$ and $y$
  are not in relation $\Rk$.
\end{proof}
It is therefore sufficient to consider connected graphs. 

\begin{defi}
  An edge-labeled graph $(T,\lambda)$ is \emph{canonical} if $T$ is
  phylogenetic and $\lambda(e)\ne 0$ for all interior edges.
\end{defi}

\begin{lemma} 
\label{lem:canonical}
  Let $(\hat T,\hat\lambda)$ be the edge labeled tree obtained from 
  $(T,\lambda)$ by (1) replacing every path $\mathcal{P}(u,v)$ in $T$ whose
  interior vertices have degree $2$ by a single edge $e'$ in $T'$ with
  label $\lambda'(e') = \sum_{e\in\mathcal{P}(u,v)} \lambda(e)$ and (2)
  contracting every interior edge with $\lambda(e)=0$.  The tree
  $(\hat T,\hat\lambda)$ is uniquely defined, canonical, and explains the
  same graph as $(T,\lambda)$.
\end{lemma}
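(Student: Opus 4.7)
My plan is to split the claim into three parts: (i) both rewrite rules preserve the leaf set $L$ and the pairwise path sums $d_{T,\lambda}(x,y):=\sum_{e\in\mathcal{P}(x,y)}\lambda(e)$ for all $x,y\in L$; (ii) iterating the two rules terminates and yields a canonical tree; and (iii) the canonical tree so produced is independent of the order of rule applications, which gives uniqueness.

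Part (i) is local. Suppressing a degree-$2$ interior vertex $v$ replaces a subpath $u$--$v$--$w$ by a single edge $uw$ with label $\lambda(uv)+\lambda(vw)$, so every leaf-to-leaf path through $v$ keeps the same total weight, and every leaf-to-leaf path avoiding $v$ is untouched. Contracting an interior edge $e$ with $\lambda(e)=0$ identifies its endpoints; any path that used $e$ loses a contribution of $0$, so path sums are again preserved. Consequently $x\Rk y$ in $(T,\lambda)$ iff $x\Rk y$ in $(\hat T,\hat\lambda)$, so by Definition~\ref{def:exact-k} the two trees explain the same graph. Part (ii) follows because rule (1) strictly decreases the number of interior vertices and rule (2) strictly decreases the number of edges; any maximal sequence of applications therefore terminates, and at a fixed point every interior vertex has degree at least $3$ while every interior edge has nonzero label, so the resulting tree is canonical.

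The main difficulty is (iii). I would deduce it from the fact, established in (i), that the function $d_{T,\lambda}$ on $L$ is an invariant of the rewriting. Let $\equiv$ be the equivalence on $L$ defined by $x\equiv y$ iff $d_{T,\lambda}(x,y)=0$. The induced function on $\equiv$-classes is well defined by the triangle inequality and is an additive tree metric whose off-diagonal values are strictly positive, hence by the classical result on additive tree metrics \cite{Buneman:71,SimoesPereira:69} it is realised by a unique edge-weighted phylogenetic tree with positive edge weights; this pins down the interior of $(\hat T,\hat\lambda)$ together with the nonzero parts of its pendant edges. To finish, one checks that in any canonical tree all members of a single $\equiv$-class must attach by zero-weight pendant edges to one common interior vertex: if not, the unique path between two $\equiv$-equivalent leaves would contain an interior edge which, since the total weight of the path is $0$, would itself carry label $0$, contradicting canonicity. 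Hence the placement of twin leaves is also forced, and any two canonical trees explaining the same graph through the same leaf pseudo-metric are isomorphic, which is exactly the asserted uniqueness of $(\hat T,\hat\lambda)$.
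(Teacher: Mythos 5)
Your proposal is correct, and parts (i) and (ii) coincide with what the paper does: the paper likewise observes that suppressing degree-$2$ interior vertices and contracting zero-labeled interior edges leaves all leaf-to-leaf path weights (hence the relation $\Rk$ and the explained graph) unchanged, and that the result is phylogenetic with no zero-labeled interior edges. Where you genuinely diverge is part (iii). The paper settles well-definedness with a purely local, one-line observation: the maximal paths whose interior vertices have degree $2$ are pairwise disjoint, so the replacements in step (1) do not interfere and can be performed in any order (and contracting a set of edges of a tree is order-independent), so $\hat T$ depends only on $T$. You instead derive uniqueness from the invariance of the leaf pseudo-metric $d_{T,\lambda}$ together with the Buneman/Sim{\~o}es-Pereira uniqueness of tree realizations of additive metrics, applied to the quotient by the zero-distance equivalence. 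This buys a strictly stronger statement -- any canonical tree is determined by its leaf pseudo-metric, not merely by the input $T$ -- which is a useful fact the paper only alludes to in its discussion of why unrooted trees suffice. The price is that your route is more delicate than you acknowledge: the classical uniqueness statement does not give a realization ``with positive edge weights'' (pendant edges may legitimately carry label $0$, as in the tree explaining $P_3$), so one must work with $X$-trees in which classes may sit at interior vertices, and after deleting all but one representative of a zero-distance class the parent vertex can drop to degree $2$, so the restricted tree must be re-suppressed before the uniqueness theorem applies. These points are all repairable, but for the modest claim actually made in the lemma -- that the two-step procedure has a well-defined output -- the paper's disjointness observation is both sufficient and considerably lighter.
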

\begin{proof}
  The maximal paths with interior vertices of degree $2$ in $T$ are
  disjoint and thus can be treated independently. By construction, any such
  path $\mathcal{P}$ can also be stepwisely replaced by edges, eventually
  arriving at the same edge weight for the single edge that remains. Given
  $T$, the resulting tree $\hat T$ is therefore unique and contains no
  vertex of degree $2$. It is therefore phylogenetic. Since an interior
  edge with label $\lambda(e)=0$ does not contribute the total weight of
  any path that runs through it, it can be contracted without changing the
  total path weights between leaves. Thus $(T,\lambda)$ and
  $(\hat T,\hat\lambda)$ explain the same graph.
\end{proof}

Consider two leaves $x,y\in L$ in an edge-labeled tree $(T,\lambda)$ such
that $x\Ro y$, i.e., $d_T(x,y)=0$, and another leaf
$z\in L\setminus\{x,y\}$. The triangle inequalities
$d_T(x,z)\le d_T(x,y)+d_T(y,z)$ and $d_T(y,z)\le d_T(y,x)+d_T(x,z)$ implies
$d_T(x,z)=d_T(y,z)$. Thus $x$ and $y$ have the same neighbors in graph $G$
explained by $T$, i.e., $N_{G}(x)=N_G(y)$.
  
\begin{defi} 
  Let $G$ be a graph. For each $x\in V(G)$ denote by $N(x)$ the neighbors
  of $x$. Two vertices $x$ and $y$ are \NEW{\emph{false twins}},
  $x\rthin y$, if $N(x)=N(y)$.
\end{defi}
\NEW{In contrast, true twins, which play no role here, satisfy
  $N(x)\cup\{x\}=N(y)\cup\{y\}$. By definition, false twins $x\rthin y$ are
  non-adjacent, while true twins are always adjacent \cite{Bandelt:86}.}
  
\begin{figure}
  \begin{center}
    \includegraphics[width=0.9\textwidth]{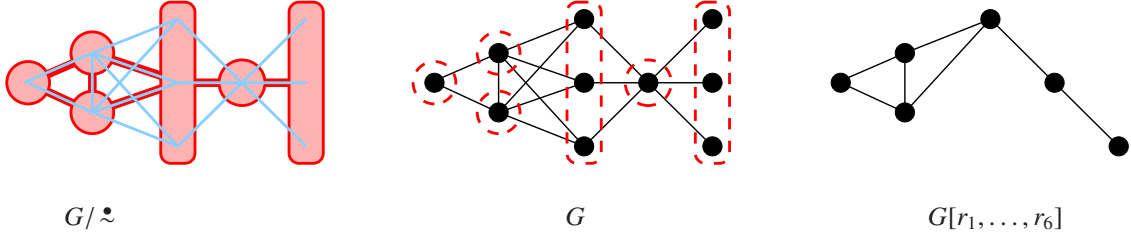}
  \end{center}
  \hfil  $G/\!\rthin$ \hfil \qquad\qquad\qquad\qquad\qquad\qquad  $G$ \hfil
   \qquad\qquad\qquad\qquad $G[r_1,\dots,r_6]$ \hfil
  \par 
  \caption{\NEW{The false twin classes are indicated by dashed outlines in
      the graph $G$ (middle) and form the vertices (in red) of the quotient
      graph $G/\!\protect\rthin$ on the right. For comparison, the orginal
      edges between the $\protect\rthin$ classes are shown.  The subgraph
      $G[r_1,r_2,r_3,r_4,r_5,r_6]$ of $G$ induced by one representative
      $r_i$ for each $\protect\rthin$-class is shown on the left. It is
      isomorphic to $G/\protect\rthin$ irrespective of the choice of the
      representative vertices $r_i$ for the $\protect\rthin$ classes.}}
  \label{fig:twin}
\end{figure}

The \NEW{false twin} (R-thinness) relation $\rthin$ has been well studied
in the literature, in particular in the context of graph products
\cite{Hammack:11}.  It is well known that $\rthin$ is an equivalence
relation, see e.g.\ \cite[sect.\ 8.2]{Hammack:11}. Its equivalence classes,
which we denote by $R_i$, $i=1,\dots,h$, are totally disconnected in $G$
because, by definition, $x\notin N(x)$. Denote by $G[r_1,r_2,\dots,r_h]$ be
the subgraph of $G$ induced by one arbitrarily chosen representative
$r_i\in R_i$ of each \NEW{false twin} class. Since for any $x\in R_i$ and
$y\in R_j$ we have $xy\in E(G)$ if and only $x'y'\in E(G)$ for all
$x'\in R_i$ and all $y'\in R_j$ we observe that $G[r_1,r_2,\dots,r_h]$ and
the quotient graph $G/\!\rthin$ are isomorphic. \NEW{An illustration is given
  in Fig.~\ref{fig:twin}.}

\begin{lemma} 
  \label{lem:Ro-thin}
  Let $(T,\lambda)$ be a canonical tree explaining a connected graph $G$
  with respect to $\Rk$, and let $W$ be a set of sibling leaves attached to
  the same parent $q$ \NEW{with} $\lambda(qw)=0$ for all $w\in W$. Then $W$
  is contained in a \NEW{false twin} class for the graph $G$ explained by
  $(T,\lambda)$ with respect to $\Rk$ for all $k>0$.
\end{lemma}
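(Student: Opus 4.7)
The plan is to show that any two leaves $w_1, w_2 \in W$ are false twins in $G$, i.e., $N_G(w_1)=N_G(w_2)$, so that the whole set $W$ sits inside a single $\rthin$-class.

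First I would observe the basic distance identities forced by $W$: since $\lambda(qw)=0$ for every $w\in W$ and the unique paths in $T$ between leaves of $W$ meet only at $q$, we have $d_T(w_1,w_2)=0$ for all $w_1,w_2\in W$; and for any leaf $z\in L\setminus W$, the path $\mathcal{P}(w_i,z)$ decomposes as $w_iq$ followed by $\mathcal{P}(q,z)$, so $d_T(w_i,z)=d_T(q,z)$ is independent of the choice of $w_i\in W$. This is the whole geometric content of the statement.

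From these identities I would deduce the equality of neighborhoods by casework on a candidate neighbor. For $z\in L\setminus W$: by Definition~\ref{def:exact-k}, $z\in N_G(w_i)$ iff $d_T(w_i,z)=k$, and the common value $d_T(q,z)$ makes this condition simultaneous for $i=1$ and $i=2$. For any $w'\in W\setminus\{w_1,w_2\}$: $d_T(w_1,w')=d_T(w_2,w')=0\neq k$ since $k>0$, so $w'\notin N_G(w_1)\cup N_G(w_2)$. Finally, $d_T(w_1,w_2)=0\neq k$ shows $w_2\notin N_G(w_1)$ and $w_1\notin N_G(w_2)$, and of course no vertex is its own neighbor. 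Collecting the cases, $N_G(w_1)=N_G(w_2)$, i.e., $w_1\rthin w_2$, which shows that $W$ lies in a single false twin class.

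I do not expect any real obstacle here; the hypothesis ``canonical'' plays no role in the argument beyond ruling out pathological re-labelings (the conclusion in fact holds without canonicity), and connectedness of $G$ is likewise not used. The only subtle point is ensuring that $W$ is truly a subset of a $\rthin$-class rather than itself constituting one, which is why the statement says ``contained in'': other leaves outside $W$ may have the same zero-distance-to-$q$ property and therefore also belong to the same class, but this is harmless.
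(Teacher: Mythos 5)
Your argument is correct and is essentially the paper's own proof: both reduce the claim to the two distance identities $d_T(w_1,w_2)=0$ and $d_T(w,z)=d_T(q,z)$ for $z\notin W$, and then read off equality of neighborhoods from Definition~\ref{def:exact-k} using $k>0$. Your additional remark that canonicity and connectedness are not actually needed is accurate; the paper simply carries these hypotheses along from the surrounding context.
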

\begin{proof} 
  Consider a node $y\in L\setminus W$. Then the total weight of the path
  between $y$ and every $w\in W$ is the same. Furthermore, the total path
  weight between any two vertices in $w',w''\in W$ is $0$, i.e., there is
  no edge between $w'$ and $w''$. Thus $N(w')=N(w'')$, i.e., $w'\rthin w''$
  for all $w',w''\in W$. 
\end{proof}

A graph is called \emph{R-thin} \cite{McKenzie:71}, \emph{point determining
  graph} \cite{Sumner:73} or \emph{mating graph} \cite{Bull:89} if $\rthin$
is discrete, i.e., every \NEW{false twin} class consists of only a single
point. Clearly, $G/\!\rthin$ is R-thin. R-thin graphs have also been studied
from the point of view of combinatorial enumeration
\cite{Kilibarda:07,Gessel:11}. Algorithms for prime-factorization of
graphs, furthermore, often operate on $G/\!\rthin$, since R-thinness ensures
uniqueness of the factorization and allows for highly efficient algorithms
\cite{McKenzie:71,Imrich:98,Hammack:11}. Below we show that it also
suffices to consider $G/\!\rthin$, i.e., R-thin graphs, in our
setting. Indeed, a simple consequence of Lemma~\ref{lem:Ro-thin} is
\begin{cor} 
  \label{cor:Ro-thin}
  If $G$ is R-thin and $(T,\lambda)$ is a canonical tree explaining $G$ with
  respect to $\Rk$, then $\Ro$ is discrete.
\end{cor}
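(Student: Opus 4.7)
The plan is to proceed by contradiction. Suppose $\Ro$ is not discrete, so there exist distinct leaves $x,y\in L$ with $x\Ro y$, that is, $d_{T,\lambda}(x,y)=0$. The aim is to deduce $x\rthin y$ in $G$, contradicting the hypothesis that $G$ is R-thin.

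First I would translate this hypothesis into the sibling picture required by Lemma~\ref{lem:Ro-thin}. Because $\lambda$ is non-negative and the total weight along the unique path from $x$ to $y$ in $T$ is zero, every edge on that path must carry weight $0$. Canonicity of $(T,\lambda)$ stipulates that each interior edge has a strictly positive label, so this path cannot traverse any interior edge at all. Consequently $x$ and $y$ are siblings attached to a common parent $q$ by pendant edges with $\lambda(qx)=\lambda(qy)=0$.

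Next I would invoke Lemma~\ref{lem:Ro-thin} with $W=\{x,y\}$ (applied inside the connected component of $G$ that contains $x$ and $y$, which is legitimate because heredity and Lemma~\ref{lem:connected} reduce the problem to each component), concluding $x\rthin y$. R-thinness of $G$ then forces $x=y$, contradicting $x\neq y$.

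There is no real obstacle. The corollary is essentially a repackaging of Lemma~\ref{lem:Ro-thin} with the definition of R-thinness; the single short step to verify is the passage from $x\Ro y$ to the sibling configuration, which is a one-line consequence of canonicity and non-negativity of $\lambda$.
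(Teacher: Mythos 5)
Your argument is correct and matches the paper's (implicit) proof: the paper states this corollary as ``a simple consequence of Lemma~\ref{lem:Ro-thin}'' without further detail, and the bridge you supply --- non-negativity of $\lambda$ plus canonicity force a zero-weight leaf-to-leaf path to collapse to a common parent $q$ with $\lambda(qx)=\lambda(qy)=0$, so Lemma~\ref{lem:Ro-thin} gives $x\rthin y$, contradicting R-thinness --- is exactly the intended one. The only quibble is the degenerate case $T=K_2$ (where no common parent exists, and where isolated $x,y$ may lie in different components), but there $N(x)=N(y)=\emptyset$ yields the contradiction directly, as does the paper's earlier triangle-inequality observation that $x\Ro y$ already implies $N(x)=N(y)$ without any appeal to canonicity.
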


\begin{algorithm} 
  \caption{Compute $(T,\lambda)$ from $(T^*,\lambda^*)$ and \NEW{false twin}
    classes $R_i$ with representatives $r_i\in R_i$.}
\label{alg:blow-up}
\begin{algorithmic}[1]
  \REQUIRE $(T^*,\lambda^*)$, $(r_i,R_i)$ for $i=1,\dots,h$. 
  \FORALL {\NEW{false twin} classes $R_i$ with $|R_i|>1$}
     \STATE $q\leftarrow$ unique neighbor of leaf $r_i$ in $(T^*,\lambda^*)$
     \STATE remove $r_i$ from $(T^*,\lambda^*)$
     \IF { $\lambda^*(qr_i)\ne k/2$ } 
        \STATE insert all leaves $r\in R_i$ with edges 
          $qr$ and $\lambda(qr)=\lambda^*(qr_i)$
     \ELSE 
         \STATE insert a node $q'$ and the edge $qq'$ with 
           $\lambda(qq')=\lambda^*(qr_i)$ 
         \STATE insert all leaves $r\in R_i$ with edges 
           $q'r$ and $\lambda(q'r)=0$
     \ENDIF 
  \ENDFOR
\end{algorithmic}
  \NEW{For an illustrative example see Fig.~\ref{fig:alg1}.}
\end{algorithm} 

\begin{figure}[hbt]
  \begin{center}
    \includegraphics[width=0.95\textwidth]{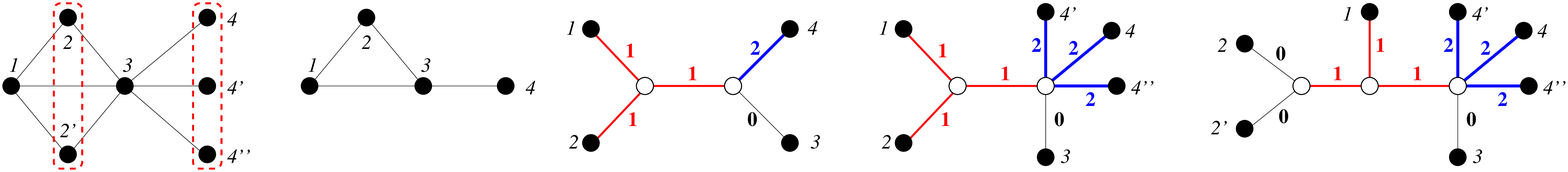}
  \end{center}
  \par
  \hfil $G$ \hfil $G/\!\protect\rthin$ \hfil $(T^*,\lambda^*)$ \hfil
  $(T_2,\lambda_2)$ \hfil $(T_{2,4},\lambda_{2,4})$ \hfil
  \par
  \caption{\NEW{The graph $G$ has two non-trivial false twin classes
      $R_2=\{2,2'\}$ and $R_4=\{4,4',4''\}$. Its R-thin quotient
      $G/\!\protect\rthin$ is explained w.r.t.\ $\Rt$ by the tree
      $(T^*,\lambda^*)$. The tree $(T_2,\lambda_2)$ is obtained from
      $(T^*,\lambda^*)$ using first alternative in Alg.~\ref{alg:blow-up}
      (line 5), amounting to replacing $2$ by $|R_2|$ leaves with the same
      parent.  In the second step, vertex $4$ is replaced by the subtree
      according to the second alternative in Alg.~\ref{alg:blow-up} (lines
      7 and 8).}  }
  \label{fig:alg1}
\end{figure}

\begin{thm}
  $G$ can be explained w.r.t.\ $\Rk$ if and only $G/\!\rthin$ can explained
  w.r.t.\ $\Rk$. If $(T^*,\lambda^*)$ is a canonical tree explaining
  $G/\!\rthin$, then a canonical tree $(T,\lambda)$ explaining $G$ is
  obtained by Algorithm~\ref{alg:blow-up}.
  \label{thm:thin} 
\end{thm}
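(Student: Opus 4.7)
The plan is to prove the two implications of the biconditional separately, and then verify that Algorithm~\ref{alg:blow-up} produces a canonical tree with the required explanatory property.

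For the ``only if'' direction, I would start from any canonical tree $(T,\lambda)$ explaining $G$ and pick one representative $r_i$ from each false twin class $R_i$. By Definition~\ref{def:displayed} one can form the displayed subtree $(T',\lambda')$ on the leaf set $\{r_1,\dots,r_h\}$ (prune, then contract degree-$2$ interior vertices, summing weights along the contracted paths). Lemma~\ref{lem:hered} then guarantees that $(T',\lambda')$ explains the induced subgraph $G[r_1,\dots,r_h]$, and the observation recorded just before Lemma~\ref{lem:Ro-thin} gives $G[r_1,\dots,r_h]\cong G/\!\rthin$. Applying Lemma~\ref{lem:canonical} to $(T',\lambda')$ yields a canonical explanation of $G/\!\rthin$.

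For the ``if'' direction, I would verify the blow-up construction of Algorithm~\ref{alg:blow-up}. Fix a canonical $(T^*,\lambda^*)$ explaining $G/\!\rthin$. For each non-trivial class $R_i$, let $q$ be the unique neighbor of the leaf $r_i$ in $T^*$. The key computation is a distance comparison: for any $w\in R_i$ and any $y\in L\setminus R_i$, the path from $w$ to $y$ in the blow-up tree $(T,\lambda)$ decomposes as $w\to q\to y$ (case $\lambda^*(qr_i)\ne k/2$) or $w\to q'\to q\to y$ (case $\lambda^*(qr_i)=k/2$). In both cases the total weight equals $\lambda^*(qr_i)+d_{T^*}(q,y)=d_{T^*}(r_i,y)$, so $w$ and $y$ are $\Rk$-related in $(T,\lambda)$ precisely when $r_i$ and $y$ are $\Rk$-related in $(T^*,\lambda^*)$. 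Since all members of $R_i$ have identical $G$-neighborhoods, this shows that all cross-class adjacencies agree with $G$. For two leaves in the same class $R_i$, the construction guarantees that their distance in $(T,\lambda)$ equals $2\lambda^*(qr_i)\ne k$ in the first case and $0\ne k$ in the second case, so they are non-adjacent in the explained graph, matching the fact that false twins are non-adjacent. I would also briefly observe that leaves in different non-trivial classes, and leaves in trivial classes, inherit correct adjacencies by the same distance identity, so the explained graph is exactly $G$.

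Finally, I would check that $(T,\lambda)$ is canonical. The only new interior edge ever introduced is $qq'$ in the second case, and its weight $\lambda^*(qr_i)=k/2$ is nonzero (since $k\geq 1$); no existing interior edge is altered, so nonzero interior labels are preserved. Phylogenetic-ness is preserved because in the first case $q$ remains interior with at least its original degree, while in the second case the new vertex $q'$ has degree $|R_i|+1\ge 3$. The main technical obstacle is precisely the case distinction on $\lambda^*(qr_i)=k/2$: if one naively re-attaches all of $R_i$ as siblings of $r_i$ with the same edge weight, then in this borderline case the members of $R_i$ would become pairwise $\Rk$-adjacent, contradicting false-twin-ness; inserting the auxiliary vertex $q'$ with zero-weight pendant edges is exactly what restores distance $0$ within $R_i$ while leaving all cross-class distances intact.
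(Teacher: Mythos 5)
Your proposal is correct and follows essentially the same route as the paper: heredity (via the induced subgraph $G[r_1,\dots,r_h]\cong G/\!\rthin$) for the forward direction, and a verification of the blow-up of Algorithm~\ref{alg:blow-up} with the same case distinction on $\lambda^*(qr_i)=k/2$ for the converse, including the same canonicity checks. Your writeup is somewhat more explicit than the paper's on the cross-class distance identity $d_{T}(w,y)=d_{T^*}(r_i,y)$, but the underlying argument is identical.
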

\begin{proof}
  Since $G$ can be explained and $G/\!\rthin$ is an induced subgraph of $G$,
  $G/\!\rthin$ can be explained w.r.t.\ to $\Rk$ by a tree that we denote by
  $(T^*,\lambda^*)$. Let $r$ be the representative of the \NEW{false twin} class
  $R$ of $G$, and let $x\in R$. Insert $x$ into $(T^*,\lambda^*)$ are a
  sibling of $r$ and set $\lambda(x)=\lambda(r)=\lambda^*(r)$. Then $x$ and
  $r$ have the same total path weights to all other vertices. This remains
  true if each leaf $r$ in $(T^*,\lambda^*)$ is replaced in this manner by
  the set $R$ of sibling vertices with $r\in R$. Since no two vertices in
  $R$ are adjacent we require that $\lambda(r)+\lambda(x)\ne k$, i.e.,
  $\lambda^*(r)\ne k/2$. If this conditions is satisfied, then
  $(T,\lambda)$ explains $G$ with respect to $\Rk$.

  If $\lambda^*(qr)= k/2\ge 1$, Alg.~\ref{alg:blow-up} inserts an extra
  vertex $q'$ adjacent to $q$ with $\lambda(qq')=\lambda^*(qr)\ne0$. Since
  we assumed that $(T^*,\lambda^*)$ was canonical, $q$ has at least two
  more neighbors, i.e., the resulting tree is again canonical. Since $R$ is
  attached with edge weights $\lambda(q'r)=0$ we conclude that (i) the
  total path weight between $r'$ and $q$ is $k/2$ and (ii)
  $\lambda(r'q')+\lambda(q'r'')=0$ for all $r',r''\in R$, i.e., $r'$ and
  $r''$ are not adjacent in the graph explained by $(T,\lambda)$.  Hence
  $r'$ and $r''$ have the same neighbors and thus belong the same
  \NEW{false twin} class of $G$. Since the total path weights between all
  representatives of \NEW{false twin} classes are preserved by this
  construction, $(T,\lambda)$ indeed explains $G$ with respect to $\Rk$.
  We note, finally, that $(T,\lambda)$ is again canonical because $q'$ has
  at least three neighbors (the parent $q$ and at least members of $R$),
  and all interior edges the resulting tree have non-zero labels as long as
  $(T^*,\lambda^*)$ was canonical.
\end{proof}

\emph{From here on we will therefore assume the $(T,\lambda)$ is canonical,
  i.e., it has non-zero labels for all inner edges of $T$.}  It is
important to note, however, that we still need to consider zero weights on
the edges incident with leaves. \NEW{For instance, it not difficult to
  check that the graph $G/\!\rthin$ in Fig.~\ref{fig:alg1}, i.e., $K_3+e$,
  cannot be explained by a tree with only non-zero edge weights.}

\section{Graphs Explained w.r.t.\ $\Rt$}

We will first consider the special case of edge labelings with discrete
$\Ro$. In this case every interior vertex of $T$ is incident with at most
one zero-weight edge.

\begin{figure}[t]
  \begin{center}
      \includegraphics[width=1.0\textwidth]{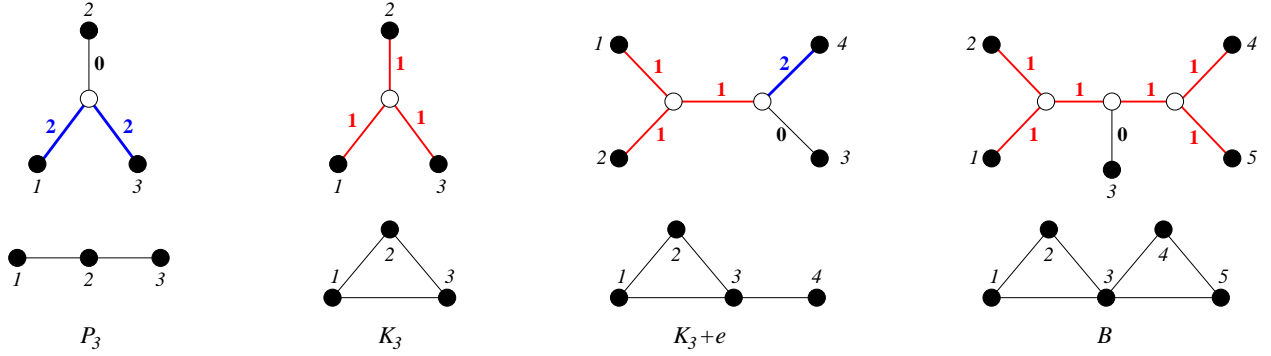}
  \end{center}
  \caption{The two connected graphs $P_3$ and $K_3$ on three vertices, the
    graph $K_3+e$ on four vertices, and bowtie graph $B$ on five vertices
    are explained by unique $\{0,1,2\}$-edge-labeled trees with respect to
    $\Rt$ and discrete relation $\Ro$.}
  \label{fig:P2K3} 
\end{figure}

The trivial cases $K_1$ and $K_2$ are explained by the trees $K_1$ and
$K_2$ with label $\lambda(e)=k$ at the unique edge $e$, respectively. For
$|L|=3$ there is only a single phylogenetic tree, the star $S_3$ with three
leaves and two connected graphs, $P_3$, and $K_3$, see
Fig.~\ref{fig:P2K3}. We denote the edges from the center to leaf $x_i$ by
$e_i$, $1\le i\le 3$. Fig.~\ref{fig:P2K3} also shows that class of graphs
explained w.r.t.\ $\Rt$ is much larger than the exact-2-leaf powers graphs,
which comprise only the disjoint unions of cliques
\cite{Nishimura:02,Brandstaedt:10}.

\begin{lemma}
  There are unique labelings $\lambda_{P_3}$ and $\lambda_{K_3}$ of the
  tree $S_3$ with discrete $\Ro$ that explain the graphs  
  $P_3$ and $K_3$, respectively:\\
  $\lambda_{P_3}(e_1)=\lambda_{P_3}(e_2)=2$ and $\lambda_{P_3}(e_3)=0$;\\
  $\lambda_{K_3}(e_1)=\lambda_{K_3}(e_2)=\lambda_{K_3}(e_3)=1$;\\
\end{lemma}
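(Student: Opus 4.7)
The plan is to parametrize the labelings of $S_3$ by the triple $(a,b,c) := (\lambda(e_1), \lambda(e_2), \lambda(e_3)) \in \mathbb{N}_0^3$ and solve directly the two arising systems. First I would use discreteness of $\Ro$ to observe that at most one of $a,b,c$ can equal $0$: if, say, $a = b = 0$, then $d_T(x_1,x_2) = 0$, so $x_1 \Ro x_2$, contradicting discreteness.

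For $K_3$, every pair of leaves must be in relation $\Rt$, yielding the linear system $a+b = b+c = a+c = 2$. Summing and halving gives $a+b+c = 3$, and subtracting each of the three equations from this sum forces $a = b = c = 1$. The resulting labeling has no zero weights, so discreteness of $\Ro$ holds automatically.

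For $P_3$, I would use the automorphism of $P_3$ swapping its two endpoints to assume without loss of generality that $x_3$ is the degree-$2$ vertex of $P_3$, so the edges are $x_1x_3$ and $x_2x_3$ and the non-edge is $x_1x_2$. This gives the conditions $a+c = 2$, $b+c = 2$, and $a+b \neq 2$. The first two equations force $a = b$, hence the inequality becomes $2a \neq 2$, i.e., $a \neq 1$, together with $c = 2 - a$. The only non-negative integer solutions are therefore $(a,b,c) = (0,0,2)$ or $(a,b,c) = (2,2,0)$. The first is excluded because it produces two zero-weight edges, which would violate discreteness of $\Ro$. This leaves $\lambda_{P_3}(e_1) = \lambda_{P_3}(e_2) = 2$ and $\lambda_{P_3}(e_3) = 0$, which one verifies produces distances $d_T(x_1,x_3) = d_T(x_2,x_3) = 2$ and $d_T(x_1,x_2) = 4$, i.e., exactly the edge set of $P_3$ with discrete $\Ro$.

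The argument is purely elementary, and the only conceptual subtlety is clarifying that ``unique'' means unique up to the automorphism of $P_3$ that swaps its two endpoints (which in the star corresponds to swapping $e_1$ and $e_2$, under which the given labeling is symmetric). No hard step arises; the main care needed is in keeping track of the discrete-$\Ro$ constraint when eliminating spurious solutions.
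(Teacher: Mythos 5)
Your proposal is correct and follows essentially the same route as the paper: both arguments write down the three path-sum constraints on $(\lambda(e_1),\lambda(e_2),\lambda(e_3))$ for $S_3$ and eliminate the spurious all-zero-adjacent solution using discreteness of $\Ro$. The only cosmetic difference is that the paper enumerates the three possible values of $\lambda(e_1)$ in a single sweep covering both target graphs, whereas you solve the two linear systems separately; the content is identical.
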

\begin{proof}
  $S_3$ contains three paths on length two. 
  Adopting the notation of Fig.~\ref{fig:P2K3} for both cases $P_3$ and
  $K_3$ we need $\lambda(e_1)+\lambda(e_2)=2$ and
  $\lambda(e_3)+\lambda(e_2)=2$. Therefore $\lambda(e_1)\in\{0,1,2\}$. 
  Explicitly enumerating the three cases yields:\\
  $\lambda(e_1)=0$ implies $\lambda(e_2)=2$ and thus $\lambda(e_3)=0$,
  in which case $x_1\Ro x_3$, contradicting the fact $\Ro$ is discrete.\\
  $\lambda(e_1)=1$ implies  $\lambda(e_2)=\lambda(e_3)=1$, and thus 
  $G(\Rt)=K_3$.\\
  $\lambda(e_1)=2$ implies $\lambda(e_2)=0$ and thus $\lambda(e_3)=2$,
  whence $G(\Rt)=P_3$.
\end{proof}

\begin{lemma} 
  \label{lem:P4} 
  The path $P_4$ on four vertices $x-y-z-u$ is explained only be the tree
  $T=(xy)p-q(zu)$ with labels $\lambda(xp)=\lambda(qu)=\lambda(pq)=2$ and
  $\lambda(yp)=\lambda(qz)=0$.
\end{lemma}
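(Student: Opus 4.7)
The plan is to enumerate, up to isomorphism, all canonical trees $(T,\lambda)$ on the leaf set $\{x,y,z,u\}$ that could explain $P_4$ w.r.t.\ $\Rt$, and then show that only the stated one survives. By Lemma~\ref{lem:canonical} we may assume $(T,\lambda)$ is canonical, and since $P_4$ is R-thin, Corollary~\ref{cor:Ro-thin} forces $\Ro$ to be discrete, i.e.\ every leaf-to-leaf distance in $T$ is strictly positive. The phylogenetic trees on four leaves are the star $S_4$ and the three binary topologies with pairings $(xy)(zu)$, $(xz)(yu)$, and $(xu)(yz)$; I would treat these four candidates in turn.

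The star is ruled out essentially by symmetry: writing $\lambda(e_v)$ for the weight of the edge incident to leaf $v$, the three $P_4$-edge conditions give $\lambda(e_x)+\lambda(e_y)=\lambda(e_y)+\lambda(e_z)=\lambda(e_z)+\lambda(e_u)=2$, hence $\lambda(e_x)=\lambda(e_z)$ and $\lambda(e_y)=\lambda(e_u)$, which forces $\lambda(e_x)+\lambda(e_u)=2$ and makes $xu$ an edge of $G(\Rt)$, contradicting $P_4$. The binary topology $(xz)(yu)$ is eliminated the same way: from $d_T(x,y)=d_T(z,y)=2$ the two leaf weights at $x$ and $z$ must coincide, and then $d_T(z,u)=2$ propagates to $d_T(x,u)=2$, a contradiction. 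The topology $(xu)(yz)$ instead fails via the discreteness hypothesis: adding $d_T(x,y)=d_T(z,u)=2$ and subtracting $d_T(y,z)=2$ gives $\lambda(xp)+\lambda(up)+2\lambda(pq)=2$, so with the canonical interior weight $\lambda(pq)\geq 1$ we must have $\lambda(xp)=\lambda(up)=0$ and hence $d_T(x,u)=0$, contradicting discreteness of $\Ro$.

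Finally, for the correct topology $(xy)(zu)$, set $a=\lambda(xp)$, $b=\lambda(yp)$, $c=\lambda(pq)\geq 1$, $d=\lambda(qz)$, $e=\lambda(qu)$. The edge conditions become $a+b=2$, $b+c+d=2$, $d+e=2$, yielding $a=c+d$ and $e=2-d$; together with non-negativity and the three non-edge inequalities $a+c+d\ne 2$, $a+c+e\ne 2$, $b+c+e\ne 2$, a short case split on $(c,d)$ uniquely forces $(a,b,c,d,e)=(2,0,2,0,2)$, which is exactly the claimed labeling $\lambda(xp)=\lambda(qu)=\lambda(pq)=2$ and $\lambda(yp)=\lambda(qz)=0$. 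The only real obstacle is the careful bookkeeping in this last enumeration; the remaining topologies collapse once one spots the correct algebraic identity.
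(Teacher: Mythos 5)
Your proof is correct and follows essentially the same route as the paper's: an exhaustive case analysis over the phylogenetic tree shapes on four leaves and their admissible non-negative integer labelings. The only organizational difference is that you split by topology (using linear identities among the path-length equations, and deriving discreteness of $\Ro$ from R-thinness of $P_4$ via Corollary~\ref{cor:Ro-thin}), whereas the paper splits on the value of $\lambda(pq)$; your version also supplies the algebraic argument ruling out $S_4$, which the paper only asserts.
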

\begin{proof}
  First we observe that the path $P_4$ on four vertices cannot be explained
  by any labeling of a $S_4$. This leaves the fully resolved tree on four
  vertices. Its interior edge $pq$ cannot be labeled $0$. First consider
  $\lambda(pq)=1$. It cannot contain an $S_3$ with all three edges labeled
  $1$ since this would induce a triangle, i.e., at most one neighbor of
  $p$, say $x$, is attached by a 1-edge. The other neighbor of $p$, call it
  $y$, then must be attached by a $0$-edge, since otherwise $y$ is
  isolated. In order for $y$ not to be isolated, $q$ also must have a
  neighbor, that is connected via a $1$-edge, say $\lambda(qz)=1$. The same
  argument implies the the remaining leaf $u$ must be connected to $q$ with
  $\lambda(qu)=0$. This tree, however, explains the non-connected graph
  $K_2\cup K_2$. Thus $\lambda(pq)=2$. Connectedness implies that at least
  one of the leaves attached to $p$ and $q$ must be labeled $0$, say
  \NEW{$\lambda(py)=\lambda(qz)=0$}, and thus $y$ and $z$ are adjacent in
  $G$.  It remains to consider the possible coloring for the remaining to
  edges $\lambda(px)$ and $\lambda(qu)$. If $\lambda(px)=1$ then $x$ is
  isolated for all choices of $\lambda(qz)$. \NEW{An analogous statement is
    true for $\lambda(qu)=1$.}  For $\lambda(px)= \lambda(qu)=0$ we obtain
  $K_4-e$. If $\lambda(px)=0$ and $\lambda(qu)=2$ we obtain $S_3$. \NEW{The
    same is true for $\lambda(px)=2$ and $\lambda(qu)=0$.} Thus the only
  remaining choice is $\lambda(px)=\lambda(qu)=2$. It indeed explains the
  \NEW{path} $x-y-z-u$, \NEW{see Fig.~\ref{fig:4vertices}}.
\end{proof}

The fact that $S_n$ is the only ``exact-2-leaf root'' of $K_n$, i.e., the
only tree with unit edge weights that explains $K_n$ is shown in
\cite[Lemma 2]{Brandstaedt:10}. It is not difficult to see that there is
also no other choice of non-negative integer labels on $S_n$ that explains
$K_n$:
\begin{lemma}
  \label{lem:Sn}
  The complete graph $K_n$ is explained with respect to $\Rt$ by the star
  $S_n$ with the unique labeling function $\lambda(e)=1$ for all
  $e\in E(S_n)$.
  \label{lem:clique}
\end{lemma}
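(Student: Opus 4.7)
The plan is to establish two assertions: \emph{existence}, that $S_n$ with $\lambda(e)=1$ on every edge explains $K_n$ with respect to $\Rt$, and \emph{uniqueness}, that no other non-negative integer labeling of $S_n$ does so. Existence is immediate: for any two distinct leaves $x_i,x_j$ the unique path $x_i\,c\,x_j$ through the center $c$ has total weight $1+1=2$, so $x_i\Rt x_j$, and hence every pair of leaves is an edge.

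For the labeling uniqueness, write $\lambda(cx_i)=a_i\in\mathbb{N}_0$. The condition $x_i\Rt x_j$ amounts to $a_i+a_j=2$ for every $i\ne j$. For $n\ge 3$, fixing any three indices $i,j,k$ gives the system $a_i+a_j=a_j+a_k=a_i+a_k=2$, whose unique solution is $a_i=a_j=a_k=1$; since every leaf lies in such a triple, all $a_i=1$. The case $n=2$ is degenerate: the single equation $a_1+a_2=2$ also admits $(0,2)$ and $(2,0)$, but both force $x_1\Ro x_2$, which is excluded in the canonical setting for the R-thin graph $K_2$ by Corollary~\ref{cor:Ro-thin}.

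To round out the picture I would also verify that $S_n$ is the only canonical tree shape that can explain $K_n$, since the text preceding the lemma only invokes Brandstaedt's unit-weight result. Suppose for contradiction that $T$ is a canonical tree explaining $K_n$ that is not a star. Then $T$ has an interior edge $e=uv$ with $\lambda(e)\ge 1$, and the leaf sets $A$ and $B$ on the two sides of $e$ each have size at least two, since $u$ and $v$ are interior of degree at least three. For every $x\in A$ and $y\in B$ we have $d_T(x,y)=d_T(x,u)+\lambda(e)+d_T(v,y)=2$. Holding $y$ fixed and varying $x$ shows $d_T(x,u)$ is independent of $x\in A$; call this common value $\alpha$, and define $\beta$ analogously for $B$. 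Then $\alpha+\beta=2-\lambda(e)\le 1$. On the other hand, for any two $x,x'\in A$ the triangle inequality along the walk through $u$ yields $2=d_T(x,x')\le d_T(x,u)+d_T(u,x')=2\alpha$, so $\alpha\ge 1$, and by symmetry $\beta\ge 1$. This gives $\alpha+\beta\ge 2$, contradicting $\alpha+\beta\le 1$.

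The main obstacle is this tree-shape step: the crucial move is deducing from the global $K_n$-condition that $d_T(x,u)$ is constant on $A$, and then playing the algebraic identity $\alpha+\beta=2-\lambda(e)$ off against the tree-metric inequality $\alpha,\beta\ge 1$. The existence check and the labeling uniqueness on $S_n$ are straightforward linear arithmetic.
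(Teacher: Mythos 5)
Your proof is correct in substance but takes a different and somewhat more self-contained route than the paper. For the uniqueness of the labeling on $S_n$, the paper does not solve the linear system directly: it invokes the already-established fact that $K_3$ is explained only by $S_3$ with all labels $1$, together with the heredity/display argument (Lemma~\ref{lem:hered}), noting that $S_n$ displays an $S_3$ for every triple of leaves. Your direct computation ($a_i+a_j=2$ for all $i\ne j$ forces $a_i=1$ when $n\ge 3$) is equivalent and arguably cleaner. Your third step --- showing that no canonical tree other than $S_n$ can explain $K_n$ --- goes beyond what the paper proves: the paper only cites Brandst{\"a}dt's result for \emph{unit} edge weights and states the lemma purely as uniqueness of the labeling on $S_n$, even though it later leans on the stronger tree-shape claim (in Lemma~\ref{lem:cycle}, via ``$(T,\lambda)$ displays $(S_n,1)$''). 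Your argument via the interior edge $e=uv$, the constancy of $d_T(x,u)$ on each side, and the tension between $\alpha+\beta=2-\lambda(e)\le 1$ and $\alpha,\beta\ge 1$ is correct for canonical trees and is a genuine added value. One small slip: in the degenerate case $n=2$ you assert that the labelings $(0,2)$ and $(2,0)$ of $S_2$ ``force $x_1\Ro x_2$''; in fact they give $d_T(x_1,x_2)=2$, so $x_1\Rt x_2$ and these labelings \emph{do} explain $K_2$ --- uniqueness on $S_2$ genuinely fails there. The correct way to dismiss this case is that $S_2$ is not a phylogenetic (canonical) tree, and the paper handles $n\le 2$ separately with the trees $K_1$ and $K_2$.
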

\begin{proof}
  Is is easy to check that this construction explains $K_n$ for all
  $n\ge 3$. The trivial cases $n=1$ and $n=2$ are explained in the
  text. \NEW{$K_3$ is only explained by $S_3$ with all edges labeled
    $\lambda(e)=1$. Since the start $S_n$ displays $S_3$ corresponding to
    every $K_3$ subgraph, all edges of $S_n$ must be labeled by
    $\lambda(e)=1$.}
\end{proof}

We note for later reference that the uniqueness results in Lemmas
\ref{lem:P4} and \ref{lem:clique} do not require the precondition that
$\Ro$ is discrete. This observation will be important in the following
section.

\begin{figure}
\begin{center}
  \includegraphics[width=0.9\textwidth]{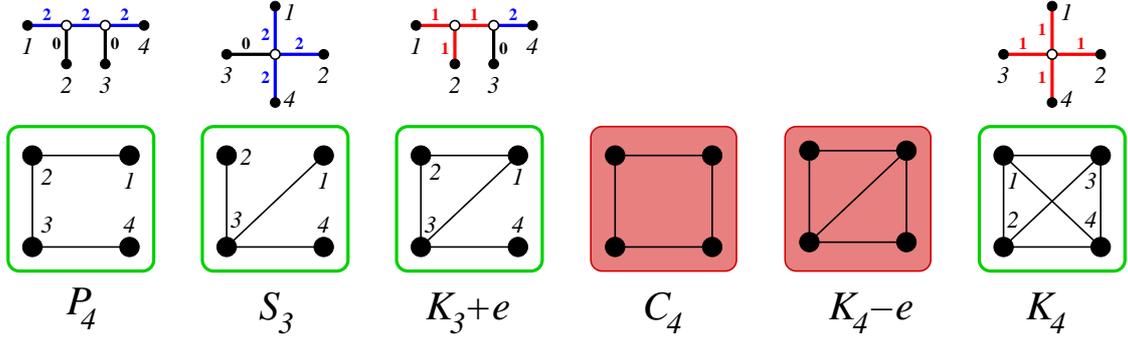}
\end{center}
\caption{Connected graphs on four vertices that are explained (green
  outline) or cannot be explained (red background) with respect to
  $\Rt$. The explaining trees are shown above the graphs.  The path $P_4$
  and the star $S_3$ are explained because all trees are already explained
  by $\Rl$. The $K_4$ is explained by Lemma~\ref{lem:clique}. The
  \NEW{graph} $K_3+e$ can be explained by a fully resolved phylogenetic
  tree with a edge labeling explicitly given in the proof of
  Lemma~\ref{lem:forbidden4}. In contrast, $C_4$ and $K_4-e$ cannot be
  explained with respected to $\Rt$ with an edge labeling with discrete
  $\Ro$ \NEW{according to Lemma~\ref{lem:forbidden4}}.}
  \label{fig:4vertices}
\end{figure}

\begin{lemma} 
  There is no edge-labeled tree $(T,\lambda)$ with discrete $\Ro$ that
  explains the graphs $C_4$ and $K_4-e$ with respect to $\Rt$.  The graph
  $K_3+e$ is explained by a unique edge-labeled tree.
  \label{lem:forbidden4}
\end{lemma}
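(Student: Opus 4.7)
The plan is to leverage the fact that any canonical phylogenetic tree on four leaves is either the star $S_4$ or the fully resolved caterpillar $(ab)p\!-\!q(cd)$, and to convert each edge/non-edge of the target graph into a linear relation on the leaf-edge weights $a_1,a_2,a_3,a_4\in\mathbb{N}_0$ and the internal weight $m\in\mathbb{N}$ (where $m\ge 1$ because $(T,\lambda)$ is canonical). The discrete-$\Ro$ hypothesis adds the side condition that no two leaves lie at total path-weight $0$. With only two tree shapes and, in the binary case, only three ways to pair the four leaves to the cherries, the search space is finite and each branch reduces to elementary arithmetic.

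For $C_4$ on $S_4$ I would use the four edge-equations $a_i+a_{i+1}=2$ (indices mod $4$) to derive $a_1=a_3$ and $a_2=a_4$, and then observe that the non-edge constraints $a_1+a_3\ne 2$ and $a_2+a_4\ne 2$ force $a_1\in\{0,2\}$, yielding two solutions both of which produce a pair of opposite leaves with $a_i=a_{i+2}=0$ and thus distance $0$ -- violating discreteness of $\Ro$. On the caterpillar I would branch on the three pairings: in the pairing $(1,2)(3,4)$ I would add the edge equations across $pq$ to conclude $m=0$, contradicting canonicity; the pairings $(1,3)(2,4)$ and $(1,4)(2,3)$ each collapse in the same manner up to symmetry. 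For $K_4-e$ the argument on $S_4$ is even shorter, since the three edges among $\{a_2,a_3,a_4\}$ force $a_2=a_3=a_4=1$, hence $a_1=1$, and then $a_1+a_2=2$, contradicting the missing edge; the three caterpillar pairings each either drive $m=0$ or put two leaves at the same cherry with weight $0$, again killing discrete $\Ro$.

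For $K_3+e$ with the pendant vertex $4$ attached to the apex $3$ of the triangle $\{1,2,3\}$, the star $S_4$ is eliminated because the triangle forces $a_1=a_2=a_3=1$, which together with $a_3+a_4=2$ forces $a_4=1$ and hence the non-edge $\{1,4\}$ to become an edge. On the caterpillar I would enumerate the three pairings. The pairing $(1,2)(3,4)$ yields, after using the symmetry $a_1=a_2$ from the two cross-edges to $3$, the values $a_1=a_2=1$, $m+a_3=1$, which by canonicity forces $m=1,a_3=0$ and then $a_4=2$; the two non-edges and discreteness of $\Ro$ are then easily verified. The pairings $(1,3)(2,4)$ and $(1,4)(2,3)$ each force a non-edge constraint to fail or create a zero-distance pair, eliminating them and proving uniqueness.

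The routine obstacle is bookkeeping: one must write down, for each pairing and each target graph, the full list of edge-equalities, non-edge-inequalities, and the discrete-$\Ro$ side conditions, and verify that every solution in $\mathbb{N}_0$ with $m\ge 1$ fails or (in the $K_3+e$-case) coincides with the claimed one. I expect the main subtlety to be the discrete-$\Ro$ condition: several of the would-be labelings satisfy every edge/non-edge equation but are ruled out only because two leaves end up at distance zero, and it is easy to overlook such cases without a systematic enumeration.
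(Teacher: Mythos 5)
Your proposal is correct and follows essentially the same route as the paper: an exhaustive case analysis over the only two phylogenetic tree topologies on four leaves (the star $S_4$ and the single-cherry-split caterpillar), checking which non-negative integer labelings realize the required edges and non-edges and using discreteness of $\Ro$ to kill the residual cases. Your bookkeeping via linear equations on the leaf weights and a branch over the three cherry-pairings is just a repackaging of the paper's branch on the interior-edge label $\lambda(s)\in\{0,1,2\}$ (with the reduction to canonical trees via Lemma~\ref{lem:canonical} justifying your assumption $m\ge 1$), and the computations you indicate check out.
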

\begin{proof} 
  There are two topologically distinct trees for $|L|=4$, the star $S_4$
  and tree $T_4$ with a single interior split. First consider the star
  $S_4$. In order to explain $K_3+e$ or $K_4-e$ three of the four edges
  must be labeled $1$ (corresponding to the induced $K_3$. Depending on
  whether $\lambda(e_4)=1$ or $\lambda(e_4)\ne 1$, the fourth vertex is
  either connected to all or none of the three other vertices. In order
  explain $C_4$, there must be two edges with $\lambda(e_1)=\lambda(e_3)=2$
  and one with $\lambda(e_2)=0$ corresponding to an induced $P_3$. The
  remaining edge then must have $\lambda(e_4)=0$. But then $x_2\Ro x_4$,
  contradicting that $\Ro$ is discrete.

  Now consider the tree$T_4$, which can be obtained from $S_3$ by subviding
  one of the edges and attaching an extra leaf to the subdividing
  vertex. Denote by $s$ the (unique) inner edge of $T_4$. Consider $K_3+e$
  and $K_4-e$ as shown in Fig.\ \ref{fig:4vertices}. Then we must have
  $\lambda(e_1)=\lambda(e_2)=1$. If $\lambda(s)=0$ we recover the situation
  of $S_4$, since the inner edge does not contribute to $\Rt$. On the other
  hand, if If $\lambda(s)=2$, then $x_1$ and $x_2$ cannot be connected with
  $x_3$ or $x_4$, contradicting the existence of $K_3$ as induced
  subgraph. Thus $\lambda(s)=1$. Then $\lambda(e_3)=0$. By assumption,
  $\lambda(e_4)\ne 0$ since otherwise $x_3\Ro x_4$. If $\lambda(e_4)=1$,
  the $x_4$ is an isolated vertex in $G$. If $\lambda(e_4)=2$, then
  $x_4\Rt x_3$ while $x_4$ is not in $\Rt$ relation to either $x_1$ or
  $x_2$. Thus $G=K_3+e$. The corresponding edge labeled tree is shown in
  Fig.~\ref{fig:P2K3}. Since we have already considered all cases, $K_4-e$
  cannot be explained with respect to $\Rt$.
 
  Finally, consider $T_4$ and suppose that $G$ contains $P_3$ as induced
  subgraph. There are two cases: If $\lambda(e_1)=\lambda(e_2)=2$ then
  connectedness of $G$ implies that
  $\lambda(s)=\lambda(e_3)=\lambda(e_4)=0$, contradicting that $\Ro$ is
  discrete. In the alternative case we can assume, w.l.o.g., that
  $\lambda(e_1)=2$ and $\lambda(e_2)=2$. Furthermore, in order to explain
  $C_4$ we must have $\lambda(e_3)+\lambda(e_4)=2$.  If both
  $\lambda(e_3)=\lambda(e_4)=1$. Then $\lambda(s)=0$ and $\lambda(s)=2$
  yields $G=K_2\cup K_2$, for $\lambda(s)=1$ we obtain $S_4$. In the
  remaining case we can choose $\lambda(e_3)=2$ and $\lambda(e_4)=0$. Now
  $\lambda(s)=0$ contradicts discreteness of $\Ro$, $\lambda(s)=1$ yields
  the edgeless graph. For $\lambda(s)=2$ we obtain $P_4$. Thus $C$ cannot
  be explained by $T_4$ with respect to $\Rt$ by a labeling with discrete
  $\Ro$.
\end{proof}

\begin{figure}[t]
  \centering
      \includegraphics[width=0.75\textwidth]{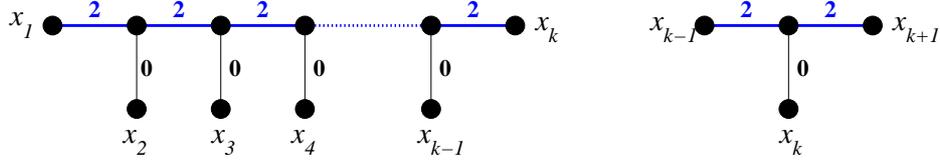}
  \caption{Construction of trees that explain paths.}
   \label{fig:path} 
\end{figure}

The fact that $K_4-e$ is a forbidden subgraph implies that two cliques in
$G$ cannot be ``glued together'' by a single common edges. It is possible,
however, for cliques to touch in a cut vertex as shown by the example of
the bowtie graph $B$, which is obtained by gluing together two triangles at
a common vertex, see Fig.~\ref{fig:P2K3}.

Graphs that can be represented as pairwise compatibility graphs of
caterpillars have received special attention in the literature
\NEW{\cite{PCGsurvey,Brandstaedt:08,Calameroni:14,Salma:13}}.  It is not
difficult to see that the path $P_h$, $h\ge 3$ can represented by a
caterpillar in several settings. These results cannot be directly applied
in \NEW{our} setting, however. Any two leaves $x$ and $y$ attached to two
distinct inner vertices of a caterpillar are separated by at least three
edges an thus cannot be in relation $\Rt$ if we assume strictly positive
integer weights. It follows immediately that $P_h$ is not an exact-2-leaf
power of a caterpillar and that $P_h$ cannot be explained by caterpillar
unless zero-weights are allowed. An explicit construction in
\cite{Hellmuth:17q} shows that $P_h$ is explained by a caterpillar with
edge weights in $\{0,1\}$ with respect to exactly-1-relation $\Rl$.  Lemma
\ref{lem:multiple} implies that we can use the same construction to explain
$P_h$ by a caterpillar with edge weights in $\{0,2\}$, see
Fig.~\ref{fig:path}. It will be important later on that this construction
is indeed unique:

\begin{lemma}
  \label{lem:Ph}
  The path $P_h$ has as its unique explaining tree the caterpillar
  $(T_h,\lambda_h)$ with all inner edges and the edges connecting to the
  end-points of $P_h$ labeled $2$ and all edges connecting to inner
  vertices of $P_h$ labeled $0$.
\end{lemma}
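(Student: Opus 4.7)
The plan is to proceed by induction on $h \ge 3$, under the standing assumption of this section that $\Ro$ is discrete on the explaining tree. The base cases $h=3$ and $h=4$ are the preceding uniqueness statement for $P_3$ on $S_3$ and Lemma~\ref{lem:P4}, respectively. For the inductive step with $h \ge 5$, fix any canonical $(T,\lambda)$ explaining $P_h$ and label its leaves $v_1,\ldots,v_h$ along the path. By Lemma~\ref{lem:hered} applied to $L' = \{v_2,\ldots,v_h\}$, the induced subgraph is $P_{h-1}$, so the displayed tree on $L'$ is by induction the unique caterpillar $(T_{h-1},\lambda_{h-1})$. Write its spine as $s_1,\ldots,s_{h-3}$, with $v_2,v_3$ at $s_1$ (weights $2$ and $0$), $v_{h-1},v_h$ at $s_{h-3}$ (weights $0$ and $2$), each intermediate $v_i$ alone at $s_{i-2}$ with weight $0$ for $4 \le i \le h-2$, and every spine edge of weight $2$.

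The tree $T$ must then be obtainable from this caterpillar by re-inserting the single missing leaf $v_1$ in exactly one of two ways: either (a) $v_1$ is attached by a new leaf edge to an existing spine vertex $s_j$, or (b) a new internal vertex $p_1$ subdivides an existing edge and $v_1$ is attached to $p_1$. No further new internal vertex of $T$ can exist, because its degree is unaffected by deleting $v_1$ and it would therefore survive in the displayed caterpillar, contradicting the uniqueness granted by the induction hypothesis. In case (b), canonicity forces both halves of the subdivided inner edge to carry positive integer weights summing to the original.

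I then eliminate all but one configuration using $d_T(v_1,v_2) = 2$ together with $d_T(v_1,v_j) \ne 2$ for $3 \le j \le h$. In case (a), the identity $d_T(v_1,v_2) = w + 2j$ forces $j = 1$ and $w = 0$; but then $d_T(v_1,v_4) = 0+2+0 = 2$ contradicts $v_1v_4 \notin E(P_h)$, since $v_4$ is attached to $s_2$ with weight $0$ whenever $h \ge 5$. In case (b), subdividing a weight-$0$ leaf edge would introduce an inner edge of weight $0$, violating canonicity; subdividing a spine edge $s_j s_{j+1}$ is forced by canonicity to split as $1{+}1$ and gives $d_T(v_1,v_2) \ge 2j + 1 \ge 3$; subdividing the far edge $v_h s_{h-3}$ gives $d_T(v_1,v_2) \ge 2h-5 \ge 5$ for $h \ge 5$. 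The sole surviving option is subdividing $v_2 s_1$ of weight $2$; writing $w_1,w_2$ for the new half-weights and $w_3 = \lambda(v_1 p_1)$, the requirements $w_1 + w_2 = 2$, $w_2 \ge 1$, $w_3 + w_1 = 2$, and $w_3 + w_2 \ne 2$ uniquely force $w_1 = 0$, $w_2 = 2$, $w_3 = 2$. The resulting tree is exactly the caterpillar $(T_h,\lambda_h)$, with $p_1$ taking the role of the new end spine vertex.

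The main technical obstacle is the exhaustive case analysis in the third step, but the arithmetic collapses uniformly because every spine segment has weight $2$ and canonicity sharply restricts the admissible splits of weight-$2$ inner edges; the only non-trivial subtlety is the use of the intermediate leaf $v_4$ (rather than just $v_3$) to rule out the direct attachment at $s_1$ with weight $0$.
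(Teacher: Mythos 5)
Your proof is correct and follows essentially the same route as the paper's: induction on $h$ with Lemma~\ref{lem:P4} as the base case, using uniqueness of the explaining caterpillar for $P_{h-1}$ and a case analysis of where the removed endpoint can be reattached (your case analysis is in fact more explicit than the paper's ``one easily checks''). One small remark: the paper emphasizes that the result holds even without assuming $\Ro$ is discrete --- a fact it needs later for Lemma~\ref{lem:C_p} --- and since your inductive step for $h\ge 4$ never actually uses discreteness, you could safely drop that standing assumption from your framing.
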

\begin{proof}
  We first recall that the tree $(T_4,\lambda_4)$ explaining $P_4$ is
  unique by Lemma~\ref{lem:P4}.  Now assume that for $h\ge 5$, the tree
  $(T_{h-1},\lambda_{h-1})$ explaining $P_{h-1}$ is unique and thus a
  caterpillar. Any tree $(T,\lambda)$ explaining $P_h$ therefore must
  display $(T_{h-1},\lambda_{h-1})$, i.e., $(T,\lambda)$ is obtained from
  $T_h$ by subdiving one edge and attaching leaf $h$ and edge $e_h$ to the
  new vertex, or by attaching $h$ and $e_h$ to an inner vertex of
  $T_{h-1}$. One easily checks that the latter yields a branched tree or a
  disconnected graph. The same is true is any other edge except $e_{h-1}$
  and $e_1$, the edges adjacent to the leaves $h-1$ or $1$ are
  subdivided. In the latter case, $h$ cannot be adjacent to $h-1$. In the
  remaining case, the edge with which $h-1$ is attached is subdivided into
  an interior part $s$ and the part $e_{h-1}$ incident with $h-1$. Since
  the interior part cannot carry a zero label, we must have
  $\lambda_h(e_{h-1})=0$, $\lambda_h(s)=2$, and $\lambda_h(e_h)=2$. Thus
  the caterpillar of Fig.~\ref{fig:path} is indeed the only choice.  We
  emphasize that this observation remains true even is $\Ro$ is not assumed
  to be discrete.
\end{proof}

\begin{lemma}
  \label{lem:C_p}
  The simple cycles $C_p$, $p\ge 5$ cannot be explained with respect to
  $\Rt$ irrespective of whether $\Ro$ is discrete or not.
\end{lemma}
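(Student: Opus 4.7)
I would argue by contradiction. Suppose $(T,\lambda)$ explains $C_p$ for some $p\ge 5$, with the cyclic vertices labelled $v_1,v_2,\ldots,v_p$. The strategy is to exploit two carefully chosen induced subgraphs of $C_p$ and read off two different, incompatible values for the tree distance $d_T(v_1,v_{p-1})$.

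\emph{Step 1: using a long induced path.} The leaf set $L_1=\{v_1,v_2,\ldots,v_{p-1}\}$ induces $P_{p-1}$ in $C_p$, because for $p\ge 5$ the vertices $v_1$ and $v_{p-1}$ are not adjacent in the cycle. By Lemma~\ref{lem:hered} the restriction of $(T,\lambda)$ to $L_1$ explains $P_{p-1}$, and by Lemma~\ref{lem:Ph} (whose uniqueness does not presuppose that $\Ro$ is discrete) this restriction must be the caterpillar $(T_{p-1},\lambda_{p-1})$ described there. A walk along the backbone of this caterpillar, whose $p-3$ interior edges all carry weight $2$ and whose two pendant edges at the endpoints also carry weight $2$, yields $d_T(v_1,v_{p-1}) = 2(p-2)$.

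\emph{Step 2: using a short induced path through the wrap-around edge.} Now consider $L_2=\{v_1,v_2,v_{p-1},v_p\}$. A direct inspection of cyclic distances shows that for $p\ge 5$ the only edges of $C_p$ with both endpoints in $L_2$ are $v_1v_2$, $v_{p-1}v_p$, and the wrap-around edge $v_1v_p$; in particular $v_1v_{p-1}$, $v_2v_{p-1}$ and $v_2v_p$ are all non-edges. Hence $C_p[L_2]$ is the induced path $v_2-v_1-v_p-v_{p-1}$, i.e.\ a $P_4$. By Lemmas~\ref{lem:hered} and~\ref{lem:P4} (whose uniqueness likewise does not need $\Ro$ to be discrete), the restriction of $(T,\lambda)$ to $L_2$ is forced to be the unique labeled tree of Lemma~\ref{lem:P4}. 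In that tree the ``skip-one'' distance equals $4$, and $v_1,v_{p-1}$ occupy exactly the two skip-one positions in the ordering $v_2v_1v_pv_{p-1}$. Therefore $d_T(v_1,v_{p-1})=4$.

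\emph{Conclusion.} Combining the two steps gives $2(p-2)=4$, i.e.\ $p=4$, contradicting $p\ge 5$. Since neither step invokes discreteness of $\Ro$, this rules out any explaining tree whatsoever.

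The main obstacle is essentially bookkeeping: one must check that the two prescribed vertex sets really do induce the claimed $P_{p-1}$ and $P_4$ for every $p\ge 5$, and then translate the uniqueness statements of Lemmas~\ref{lem:Ph} and~\ref{lem:P4} into concrete values for $d_T(v_1,v_{p-1})$. Both computations are short, and no further ideas are needed.
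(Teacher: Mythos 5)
Your argument is correct, and its first half coincides with the paper's: both proofs start from the induced path $P_{p-1}$ on $\{v_1,\dots,v_{p-1}\}$ and invoke the uniqueness of its explaining caterpillar (Lemma~\ref{lem:Ph}, which indeed does not require $\Ro$ to be discrete). The proofs diverge in how the contradiction is extracted. The paper performs a structural case analysis: any tree explaining $C_p$ must be obtained from the caterpillar $(T_{p-1},\lambda_{p-1})$ by attaching the leaf $v_p$ to an inner vertex or to a subdivision point of some edge, and each placement either disconnects the graph, branches the path, or fails to make $v_p$ adjacent to both $v_1$ and $v_{p-1}$. You instead apply heredity (Lemma~\ref{lem:hered}) a second time to the four-element set $\{v_1,v_2,v_{p-1},v_p\}$, which induces the $P_4$ given by $v_2-v_1-v_p-v_{p-1}$ through the wrap-around edge; the uniqueness in Lemma~\ref{lem:P4} then forces $d_T(v_1,v_{p-1})=4$, while the caterpillar forces $d_T(v_1,v_{p-1})=2(p-2)\ge 6$. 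This numerical clash replaces the paper's enumeration of attachment sites, making the second half of your proof more self-contained (the paper's case analysis leans on an ``as argued above'' reference back to the proof of Lemma~\ref{lem:Ph}), at the modest cost of the extra bookkeeping needed to verify that the chosen quadruple really induces a $P_4$ for every $p\ge 5$ and that both uniqueness lemmas pin down distances between leaves of the displayed subtrees, which equal the corresponding distances in $T$. Both routes are sound.
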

\begin{proof}
  Every cycle $C_p$ contains a path $P_{p-1}$ with one vertex less as an
  induced subgraph. From Lemma~\ref{lem:Ph} we known that $P_{p-1}$ has a
  unique explanation by a caterpillar for all $p\ge 5$. Thus any tree
  $(T^*,\lambda^*)$ explaining $C_p$ thus must display caterpillar
  $(T_{p-1},\lambda_{p-1})$ and thus $T^*$ is obtained from $T_{p-1}$ by
  either attaching $p$ and $e_p$ to inner vertex of $T_{p-1}$ or by
  subdiving an edge and attaching $p$ and $e_p$ to the newly inserted
  vertex. As argued above, attachment to an inner vertex or subdivision of
  an edge other than $e_1$ or $e_{p-1}$ leads to a branched tree or a
  disconnected graph. If $e_p$ is inserted by subdivision of $e_1$, then
  $p$ cannot be adjacent to $p-1$ and subdivision of $e_{p-1}$ precludes
  adjacency of $p$ and $1$ for $p\ge 3$. Thus the catapillar tree
  $(T_{p-1},\lambda_{p-1})$ cannot be extented to tree that explains $C_p$
  for any $p\ge 4$. Note that this argument did not make the assumption
  that $\Ro$ is discrete.
\end{proof}

Let us now turn to the general case.  We first note that all graphs
\NEW{explained w.r.t.\ $\Rt$ with discrete $\Ro$} are chordal, i.e., every
cycle of length greater than three has a chord. Even more stringently,
every cycle of length $4$ corresponds to a clique in $G$ because the
$K_4-e$, i.e., the 4-cycle with a chord, is also a forbidden induced
subgraph. We note that there is ample literature on the relationship of
chordal graphs and PCGs, see e.g.\ \cite{Yanhaona:09,PCGsurvey}. Due to the
differences in the edge weight functions, it is not immediately pertinent
to our discussion, however.

\begin{lemma}
\label{lem:Hamilton} 
If $G$ can be explained by the exact-$2$-relation with discrete $\Ro$ and
contains a Hamiltonian cycle, then $G$ is a complete graph.
\end{lemma}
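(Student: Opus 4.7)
The plan is to combine the forbidden-subgraph obstructions established in Lemmas~\ref{lem:forbidden4} and \ref{lem:C_p} with the classical structure theory of block graphs. First, I would observe that being explained with respect to $\Rt$ by a tree with discrete $\Ro$ is a hereditary property: Lemma~\ref{lem:hered} shows that the subtree displayed on any leaf subset $L'$ explains the induced subgraph on $L'$, and restricting to $L'\subseteq L$ cannot decrease pairwise tree distances, so discreteness of $\Ro$ is inherited. Consequently, $G$ contains no induced $C_4$ and no induced $K_4-e$ (by Lemma~\ref{lem:forbidden4}) and no induced $C_p$ for $p\ge 5$ (by Lemma~\ref{lem:C_p}). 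Thus $G$ is simultaneously chordal and diamond-free.

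Next I would invoke the classical characterization (Howorka, Harary) that a graph is a \emph{block graph}, i.e.\ a graph whose biconnected components are all complete subgraphs, if and only if it is chordal and contains no induced $K_4-e$. Hence $G$ is a block graph. Since $G$ carries a Hamilton cycle and $n:=|V(G)|\ge 3$, $G$ is $2$-connected, so it contains exactly one biconnected component, namely $G$ itself. As every block of a block graph is a clique, it follows that $G\cong K_n$.

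The only step that requires external input is the appeal to Howorka's characterization of block graphs; its role is essentially to package the consequences of chordality and diamond-freeness into a single structural statement. If a more self-contained presentation is preferred, one can replace it by a short induction on $n$: the base $n\le 3$ is immediate, and for $n\ge 4$ chordality forces a chord of the Hamilton cycle, whereupon the absence of induced $K_4-e$ compels the four vertices incident to this chord together with a suitable flanking cycle edge to induce a $K_4$; iterating this observation propagates completeness to all of $G$. I expect this direct argument, rather than any deep obstacle, to be the only minor technical nuisance; the block-graph route is cleaner and also fits naturally with the development in the remainder of the paper.
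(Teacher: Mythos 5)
Your proof is correct, but it takes a genuinely different route from the paper. The paper argues by induction on $|V(G)|$: chordality yields a triangulation of the Hamiltonian cycle $C$ and hence three consecutive vertices $u-v-w$ on $C$ with $uw\in E(G)$, so $G\setminus v$ is Hamiltonian and complete by induction, and then the exclusion of induced $C_4$ and $K_4-e$ forces $v$ to be adjacent to every other vertex. You instead observe that heredity (Lemma~\ref{lem:hered}, under which path weights -- and hence discreteness of $\Ro$ -- are preserved exactly on leaf subsets) excludes induced $C_4$, $K_4-e$ (Lemma~\ref{lem:forbidden4}) and $C_p$, $p\ge 5$ (Lemma~\ref{lem:C_p}), so $G$ is chordal and diamond-free; you then invoke the classical Howorka/Bandelt--Mulder characterization that such graphs are exactly the block graphs, and conclude since a Hamiltonian graph on $n\ge 3$ vertices is $2$-connected and therefore consists of a single block. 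This is not circular -- you use only lemmas proved before this point, not Theorem~\ref{thm:block} -- but it does import an external structural theorem that the paper never cites, and in effect it proves a stronger statement (every explainable graph with discrete $\Ro$ is a block graph) from which the lemma and Lemma~\ref{lem:2conn-clique} both follow immediately; the paper deliberately builds up to that conclusion in the opposite order, keeping the argument self-contained. Your sketched fallback induction is essentially the paper's proof, so if you want to avoid the external reference you should flesh out that version instead.
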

\begin{proof}
  The assertion is trivially true for $n=3$ and holds for $n=4$ because 
  $C_4$ and $K_4-e$, the only Hamiltonian graphs on 4 vertices except $K_4$ 
  are forbidden induced subgraphs. Now suppose the statement is true for 
  for all $|V|<p$ and consider a graph with $p$ vertices. Since $G$ is 
  chordal, there is in particular a planar triangulation of $C$ that is 
  a subgraph of $G$ and thus there are three consecutive vertices 
  $u-v-w$ along $C$ such that $u-w$ is a also an edge in $G$. Thus 
  $G\setminus v$ is Hamiltonian. As an induced subgraph of $G$ it can be 
  explained by the exact-$2$-relation and thus is a complete graph by the 
  induction hypothesis. Thus $u-x-w$ is triangle in $G\setminus v$ and 
  $u-x-w-v$ is a cycle of length $4$ in $G$. Since $C_4$ and $K_4-e$ cannot 
  appear as induced subgraphs of $G$, $\{u,v,w,x\}$ must for a clique in
  $G$, and hence the edge $\{v,x\}\in E(G)$ for all $x\in V(G\setminus v)$.
  Thus $G$ is a complete graph. 
\end{proof}

\begin{lemma} 
  A graph $G$ with at least three vertices that can be explained by the
  exact-2-relation with discrete $\Ro$ is complete if and only if it is
  2-connected.
  \label{lem:2conn-clique}
\end{lemma}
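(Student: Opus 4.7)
The forward direction is immediate: $K_n$ with $n\ge 3$ is $(n-1)$-connected, hence 2-connected.

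For the converse, my plan is to proceed by induction on $n=|V(G)|$, making use of the forbidden induced subgraphs already established. By Lemma~\ref{lem:C_p} and Lemma~\ref{lem:forbidden4}, the graph $G$ is chordal (no induced $C_p$ for $p\ge 4$) and contains no induced $K_4-e$. The base case $n=3$ is immediate because the only 2-connected graph on three vertices is $K_3$.

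For $n\ge 4$, I would select a simplicial vertex $v$ of $G$ (which exists because $G$ is chordal) and first argue that $G-v$ is again 2-connected. Suppose towards a contradiction that $G-v$ has a cut vertex $w$. Since $G$ is 2-connected, $G-w$ is connected, so $w$ being a cut vertex of $G-v$ is equivalent to $v$ being a cut vertex of $G-w$. But the neighbors of $v$ in $G-w$ form a subset of $N_G(v)$, which is a clique by simpliciality, so they all lie in a single component of $G-\{v,w\}$; hence $v$ cannot be a cut vertex of $G-w$, a contradiction. Since being chordal and $K_4-e$-free are hereditary under induced subgraphs, the inductive hypothesis applies to $G-v$ and yields that $V(G)\setminus\{v\}$ spans a $K_{n-1}$.

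It remains to show $v$ is adjacent to every other vertex. 2-connectedness of $G$ forces $|N(v)|\ge 2$; pick $u_1, u_2 \in N(v)$. If some $w\in V(G)\setminus(\{v\}\cup N(v))$ existed, then $w$ would be adjacent to both $u_1$ and $u_2$ because $V(G)\setminus\{v\}$ is a clique by induction, so $\{v, u_1, u_2, w\}$ would induce a $K_4-e$ (with missing edge $vw$), contradicting Lemma~\ref{lem:forbidden4}. Hence $N(v) = V(G)\setminus\{v\}$, and $G = K_n$.

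The main technical obstacle is the verification that $G-v$ remains 2-connected after deletion of a simplicial vertex; the argument hinges on using the clique structure of $N(v)$ to rule out new cut vertices, and does not generalize to arbitrary vertex deletions. An alternative, more conceptual route would be to invoke the classical characterization of block graphs as exactly the chordal $K_4-e$-free graphs, so that $G$ is a block graph and its 2-connectedness forces it to consist of a single block, which must be a clique.
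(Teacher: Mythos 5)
Your proof is correct, but it takes a genuinely different route from the paper's. The paper first establishes Lemma~\ref{lem:Hamilton} (a Hamiltonian graph explainable with discrete $\Ro$ is complete) and then derives the present lemma by showing that a 2-connected graph of this type must be Hamiltonian: it takes a longest cycle $C$, uses 2-connectedness to find a path through a vertex $x\notin C$ that meets $C$ in two vertices $p,q$, and exploits completeness of $G[C]$ to reroute $C$ into a strictly longer cycle, contradicting maximality. You bypass Hamiltonicity entirely: from the forbidden induced subgraphs $C_4$ and $K_4-e$ (Lemma~\ref{lem:forbidden4}) and $C_p$, $p\ge 5$ (Lemma~\ref{lem:C_p}), together with heredity (Lemma~\ref{lem:hered}), you reduce the claim to the purely graph-theoretic statement that a 2-connected chordal, diamond-free graph is complete, and prove that by induction via deletion of a simplicial vertex. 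Your verification that $G-v$ remains 2-connected is sound (every component of $G-\{v,w\}$ must see a neighbor of $v$, and $N(v)$ being a clique forces a single component), as is the closing $K_4-e$ argument making $v$ universal. Both proofs ultimately rest on the same forbidden-subgraph lemmas; the paper's version reuses Lemma~\ref{lem:Hamilton}, which it needs anyway, whereas yours is self-contained, avoids the longest-cycle machinery, and makes explicit that the underlying content is the classical fact that a 2-connected block graph (equivalently, a 2-connected chordal diamond-free graph) is a clique --- precisely the alternative you note at the end, and the route the paper itself gestures at when it introduces block graphs immediately after this lemma.
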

\begin{proof}
  If $G$ is Hamiltonian, it is in particular also 2-connected. Now consider
  the case that $G$ is 2-connected but not Hamiltonian. Let $C$ be a cycle
  of maximal length in $G$ and let $x$ be a vertex not in $C$. Then there
  is a cycle $C'$ in $G$ that contains $x$ and at least two distinct
  vertices of $C$ since otherwise one of the vertices of $C$ would be a cut
  vertex of $G$, contradicting 2-connectedness. Starting from $x$, let $p$
  and $q$ be first and last vertex of $C$ encountered along $C'$. By
  Lemma~\ref{lem:Hamilton}, $G[C]$ is a complete graph, and hence there is
  a another Hamiltonian cycle $C''$ on $G[C]$ so that $p$ and $q$ are
  consecutive along $C'$. Thus the cycle $C^*$ obtained traversing $C''$
  from $p$ to $q$ and then following $C'$ from $q$ through $x$ back to $p$
  is a cycle that is strictly longer than $C$, contradicting maximality.
  Thus $G$ is Hamiltonian, and hence complete. 
\end{proof}

A graph $G$ is a \emph{block graph} \cite{Harary:63} if each of its
biconnected components is a clique. Lemma~\ref{lem:2conn-clique} thus
implies that every graph that can be explained with respect to the
exact-$2$-relations with discrete $\Ro$ is a block graph \NEW{(see Thm.\
  \ref{thm:block} below for a formal proof)}.
Algorithm~\ref{alg:blockgraph} (illustrated in Figure~\ref{fig:blockgraph})
\NEW{explicitly constructs an edge-labeled tree that explains a given block
  graph.}

\begin{algorithm}
\caption{Compute $(T(G),\lambda)$ for a connected block graph $G$}
\label{alg:blockgraph}
\begin{algorithmic}[1]
  \REQUIRE a connected block graph $G$ 
  \STATE mark ``red'' all cut vertices $u\in V(G)$.
  \FORALL {cliques $K$ in $G$} 
     \IF { $K$ is an edge $e$ } 
        \STATE $\lambda(e)=2$ 
     \ELSE 
        \STATE replace $K$ by a star $S_{|V(K)|}$ with center $c_K$
        \STATE $\lambda(u c_K)=1$ for each $u\in V(K)$
     \ENDIF
  \ENDFOR
  \FORALL {red vertices $v$}
     \STATE add a vertex $v'$ and edge $vv'$ with $\lambda(vv')=0$
     \STATE exchange the vertex names $v$ and $v'$
  \ENDFOR
  \RETURN $(G,\lambda)$
\end{algorithmic}
\end{algorithm}

\begin{lemma} 
\label{lem:algorithm}
Algorithm~\ref{alg:blockgraph} \NEW{transforms any connected block-graph
  $G$ into} an edge-labeled tree \NEW{that explains} $G$ with respect to
the exactly-$2$-relation \NEW{$\Rt$} with discrete $\Ro$.
  \label{lem:algo} 
\end{lemma}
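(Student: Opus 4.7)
The plan is to establish three things in sequence: that the object returned by Algorithm~\ref{alg:blockgraph} is actually a tree, that the weighted path between any two of its leaves has total weight $2$ exactly when the corresponding vertices are adjacent in $G$, and that the resulting $\Ro$ relation is discrete.

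First I would check that the output is a tree. I would start from the standard fact that the block-cut tree of a connected block graph is indeed a tree, and view the algorithm as a local refinement of that block-cut tree: each clique block on $n\ge 3$ vertices is replaced by a star on the same $n$ vertices plus a fresh center $c_K$, edge blocks are retained as single edges, and cut vertices are shared between blocks exactly as before. Since each block substitution is itself a tree and blocks meet only in single cut vertices, the refined object is still acyclic and connected. Step 3 attaches to each cut vertex a pendant joined by a $0$-edge, which clearly preserves the tree property; the name swap is purely cosmetic and ensures that every original vertex of $G$ reappears as a leaf of the constructed tree.

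Next, and this is where the real content lies, I would compute path weights between pairs of leaves. Every leaf of the tree corresponds to some $v\in V(G)$: either $v$ is non-cut and survives as a leaf of the original block structure, or $v$ is cut and now appears as a pendant attached by a $0$-edge to an internal node. I would then split into two cases. If $x$ and $y$ lie in a common block $B$, then either $B$ is an edge block, in which case the direct edge already has label $2$, or $B$ is a clique on $\ge 3$ vertices, in which case the path $x\!-\!c_B\!-\!y$ has weight $1+1=2$; any $0$-edge pendants sitting at cut-vertex endpoints contribute nothing. If $x$ and $y$ lie in distinct blocks, then the path in the tree must traverse at least two blocks in the block-cut tree, and I would argue that each traversed block contributes at least $2$ to the total weight (weight $2$ through a star via its center, weight $2$ across an edge block joining two cut vertices), yielding a total weight of at least $4$. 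Because $G$ is a block graph, two vertices are adjacent in $G$ precisely when they share a block, so this case analysis shows that $x\Rt y$ iff $xy\in E(G)$.

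Finally, discreteness of $\Ro$ follows immediately from the weight analysis: any two distinct leaves are separated by a path of total weight at least $2$, so no two leaves collapse under $\Ro$.

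The main obstacle I anticipate is the bookkeeping around step 3 of the algorithm. One must be careful that after the name swap the pendant leaf named $v$ correctly plays the role of the original vertex $v\in V(G)$ in all weight calculations, while the internal copy (now named $v'$) carries the non-zero edges to each block in which $v$ participated. Once this notational point is nailed down, the case analysis for the path weights reduces to a short and essentially mechanical check, and the claim about $\Ro$ follows with no extra work.
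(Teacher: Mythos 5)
Your proof is correct and follows essentially the same route as the paper's: verify that the block-by-block substitution yields a tree with all of $V(G)$ as leaves, then check that leaf-to-leaf path weights equal $2$ exactly for pairs sharing a block and are at least $4$ otherwise. Your explicit one-line verification that $\Ro$ is discrete (all leaf distances are at least $2$) is a small point the paper's own proof leaves implicit, but it does not change the argument.
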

\begin{proof}
  The output of Alg.~\ref{alg:blockgraph} contains no cycles since all
  cycles in the input $G$ are contained within a block and each block is
  replaced by a star. Furthermore, the replacement of a clique $K_p$ by a
  star $S_p$ with $p+1$ vertices preserves connectedness, hence $G$ has
  been transformed into a tree at this stage. Every vertex of a clique $K$,
  with $|V(K)|\ge 3$ this is not also contained in another block is now a
  leaf; all other nodes of $K$ are marked red. Every vertex in an $K_2$
  original $K_2$ block is either a leaf or marked ``red''. By construction,
  every ``red'' vertex has degree at least $2$ and hence is not a leaf.
  The final operation adds a leaf to each ``red'' vertex. Together with the 
  renaming of the vertices, thus, every vertex of the input graph is now a
  leaf in $T$. 

  Now consider the labeling. First suppose that $u$ and $v$ are
  non-adjacent in the input $G$, that is, there is a least one cut-vertex,
  say $z$, between them in $G$. The construction of $T(G)$ ensures that the
  unique path from $u$ to $v$ in $T(G)$ runs through a vertex $z'$ that $z$
  as its neighbor. If the path from $u$ to $z$ in $G$ ran through an edge
  in a triangle, it passes through the corresponding star and hence
  contains two edges labeled $1$. Otherwise it runs through an unaltered
  $K_2$-block of $G$, which is labeled $2$. In each case, therefore,
  $d_{T(G),\lambda}(x,y)\ge 4$. Now suppose that $u$ and $v$ are adjacent
  in $G$. First suppose $uv$ is contained in a triangle of $G$.  If neither
  $u$ nor $v$ was marked ``red'' they are both adjacent to the center $c_K$
  of a star with edges labeled $1$. If $u$ was a cut vertex, i.e., marked
  ``red'', it appears a leaf adjacent to a vertex $u'$ that in turn is
  adjacent to $c_K$; furthermore $\lambda(uu')=0$ and
  $\lambda(u'c_K)=1$. Analogous reasoning applied if $v$ was a cut vertex
  of $G$. In all cases, thus
  $d_{T(G),\lambda}(uv)=d_{T(G),\lambda}(u,c_K)+d_{T(G),\lambda}(c_K,v)=1+1=2$.
  If the edge $uv$ is not contained in a triangle, then it is labeled $2$.
  If $u$ or $v$ are cut vertices, then the unique path from $u$ to $v$ is
  $u-u'-v$, $u-v'-v$, or $u-u'-v'-v$, with $\lambda(uu')=\lambda(vv')=0$ and
  a label $2$ for the remaining edge. Hence, $d_{T(G),\lambda}(uv)=2$.  In
  summary $u\Rt_{T(G),\lambda} v$ if and only $u$ and $v$ are adjacent in
  $G$. Thus $G$ is explained by $(T(G),\lambda)$ with respect to the
  exactly-$2$-relation.
\end{proof} 

\begin{figure}
\begin{center}
  \includegraphics[width=0.8\textwidth]{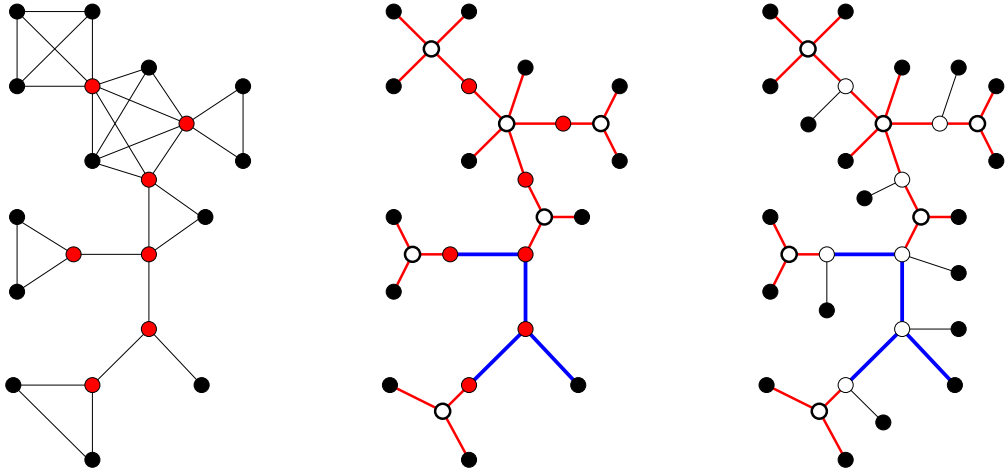} 
\end{center}
\caption{Illustration of Alg.~\ref{alg:blockgraph}. First all cut vertices
  of the input graph are marked. In the next step, edges not contained in
  triangles of $G$ receive label $2$ and all larger cliques are replaced by
  stars with all new edges labeled $1$. In the final step the original cut
  vertices are decorated by an additional neighbor with a $0$-labeled edge.}
\label{fig:blockgraph}
\end{figure}

\begin{thm} \label{thm:block} A graph $G$ can be explained by an
  edge-labeled tree $(T,\lambda)$ with respect to the exact-$2$-relation
  with discrete $\Ro$ if and only if it is \NEW{a} block graph.
\end{thm}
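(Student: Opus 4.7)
The strategy is to prove the two implications separately, using the groundwork already laid in the preceding lemmas.

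For the forward direction, I would assume that $G$ is explained by some $(T,\lambda)$ with respect to $\Rt$ and $\Ro$ discrete, and show that every biconnected component (block) of $G$ is a clique. Recall the standard fact that a block $B$ of a graph $G$ either is a bridge (an induced $K_2$) or is a maximal $2$-connected subgraph, in which case $B$ coincides with the subgraph $G[V(B)]$ induced by its vertex set (no extra chords are possible by maximality). The bridge case is trivial. Otherwise, Lemma~\ref{lem:hered} (heredity) guarantees that $G[V(B)]=B$ is itself explained with respect to $\Rt$ by a suitably displayed labeled subtree, and discreteness of $\Ro$ is inherited. Since $B$ is $2$-connected, Lemma~\ref{lem:2conn-clique} forces $B$ to be complete. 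Hence every block of $G$ is a clique, i.e.\ $G$ is a block graph.

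For the converse, assume $G$ is a block graph. If $G$ is connected, I would simply invoke Algorithm~\ref{alg:blockgraph} together with Lemma~\ref{lem:algorithm}, which produces an edge-labeled tree $(T(G),\lambda)$ that explains $G$ w.r.t.\ $\Rt$ and, by inspection of the algorithm, has discrete $\Ro$ (the only $0$-labeled edges are pendant edges attached to the leaves corresponding to former cut vertices, and each such leaf has a unique partner among the interior vertices, so no two leaves have distance $0$). If $G$ is disconnected, each connected component $G_i$ is itself a connected block graph and is therefore explained by some canonical $(T_i,\lambda_i)$ with discrete $\Ro$; applying Lemma~\ref{lem:connected} glues these trees into a single $(T,\lambda)$ explaining $G$ with respect to $\Rt$. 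The construction in the proof of that lemma uses edges of weight $k+1=3$ to link the $T_i$, which cannot create any new pair in $\Rt$ nor in $\Ro$, so discreteness of $\Ro$ is preserved.

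The only slightly delicate point is the forward direction: one must justify that $G[V(B)]=B$ for a non-trivial block, so that Lemma~\ref{lem:hered} can be applied legitimately. This is a standard structural property of blocks, but it is the pivot on which the argument turns, since it is what allows the characterization of $2$-connected explainable graphs from Lemma~\ref{lem:2conn-clique} to be transferred to each block of $G$. Everything else is a direct appeal to previously established results.
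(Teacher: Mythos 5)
Your proposal is correct and follows essentially the same route as the paper: the forward direction reduces each block to an induced $2$-connected subgraph via Lemma~\ref{lem:hered} and applies Lemma~\ref{lem:2conn-clique}, and the converse invokes Algorithm~\ref{alg:blockgraph} with Lemma~\ref{lem:algo} for connected block graphs and Lemma~\ref{lem:connected} for disjoint unions. Your explicit checks that blocks are induced subgraphs and that discreteness of $\Ro$ survives both restrictions and the gluing construction are points the paper leaves implicit, but they do not change the argument.
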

\begin{proof}
  \NEW{Suppose $G$ can be explained w.r.t.\ $Rt$ with discrete $\Ro$.  If
    $G$ is 2-connected, it is a clique by Lemma~\ref{lem:2conn-clique} and
    therefore also a block graph. Otherwise, we note that every 2-connected
    component $G'$ of $G$ is induced subgraph of $G$ and thus, by
    Lemma~\ref{lem:hered}, can be explained w.r.t.\ $Rt$. By
    Lemma~\ref{lem:2conn-clique} every 2-connected component $G'$ of $G$
    therefore must be a clique, i.e., $G$ is a block graph.}
    
  \NEW{Conversely, suppose that $G$ is a block graph.}  Since
  Algorithm~\ref{alg:blockgraph} is correct by Lemma~\ref{lem:algo}, every
  connected block graph can be explained.  Since the non-connected block
  graphs are just disjoint unions of connected block graphs,
  Lemma~\ref{lem:connected} completes the characterization of the
  non-connected case.
\end{proof} 

The main result of this section is now obtained as
\begin{cor} 
  A graph $G$ is explained by $\Rt$ if and only if $R/\!\rthin$ is a block
  graph. 
\end{cor}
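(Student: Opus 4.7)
The plan is to obtain the corollary as a direct packaging of Theorem~\ref{thm:thin} and Theorem~\ref{thm:block}. Theorem~\ref{thm:thin} reduces the question of whether $G$ can be explained w.r.t.\ $\Rt$ to the corresponding question for the quotient $G/\!\rthin$, while Theorem~\ref{thm:block} characterises graphs explained w.r.t.\ $\Rt$ \emph{with discrete $\Ro$} as precisely the block graphs. The only real task is to bridge these two statements by arguing that, once we have passed to $G/\!\rthin$, we can always assume that the explaining tree has discrete $\Ro$.

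For the forward direction, I would assume that $G$ is explained w.r.t.\ $\Rt$. Theorem~\ref{thm:thin} then gives an edge-labeled tree explaining $G/\!\rthin$ w.r.t.\ $\Rt$, and by Lemma~\ref{lem:canonical} this tree may be taken canonical. The key observation is that $G/\!\rthin$ is R-thin by construction, so Corollary~\ref{cor:Ro-thin} applies and guarantees that $\Ro$ is discrete on this canonical explaining tree. Theorem~\ref{thm:block} then forces $G/\!\rthin$ to be a block graph.

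For the converse, I would start from the assumption that $G/\!\rthin$ is a block graph. Theorem~\ref{thm:block} (with the explicit construction supplied by Algorithm~\ref{alg:blockgraph}) produces a canonical edge-labeled tree with discrete $\Ro$ that explains $G/\!\rthin$ w.r.t.\ $\Rt$. The sufficiency direction of Theorem~\ref{thm:thin}, implemented by Algorithm~\ref{alg:blow-up}, then lifts this to an edge-labeled tree explaining $G$ itself w.r.t.\ $\Rt$.

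I do not foresee any genuine obstacle: every ingredient has been established in the preceding sections and the corollary is essentially a bookkeeping step. The only subtle point worth spelling out is that the hypothesis ``discrete $\Ro$'' in Theorem~\ref{thm:block} is not an extra assumption in the final statement; rather, R-thinness of $G/\!\rthin$ together with Corollary~\ref{cor:Ro-thin} makes it automatic whenever we work on the quotient, which is exactly why the quotient, and not $G$ itself, is the right object to which Theorem~\ref{thm:block} must be applied.
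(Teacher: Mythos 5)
Your proposal is correct and follows essentially the same route as the paper: it combines Theorem~\ref{thm:thin}, the observation that R-thinness of $G/\!\rthin$ together with Corollary~\ref{cor:Ro-thin} makes $\Ro$ discrete on a canonical explaining tree, and Theorem~\ref{thm:block}. The only difference is that you spell out both directions explicitly, which the paper compresses into a single sentence.
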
 
\begin{proof} 
  Thm.~\ref{thm:thin} establishes that $G$ can be explained w.r.t.\ $\Rt$
  if and only if $R/\!\rthin$ can be explained w.r.t.\ $\Rt$. Since the graph
  $G/\!\rthin$ is thin, Cor.~\ref{cor:Ro-thin} implies that $\Ro$ is discrete
  for the canonical tree explaining $G/\!\rthin$, and thus
  Thm~\ref{thm:block} can be applied to $G/\!\rthin$.
\end{proof}

In the remainder of this section we consider the ambiguities in the
construction of trees explaining block graphs. We start \NEW{by}
characterizing contractible edges:
\begin{lemma}
  \label{lem:least}
  Suppose $(T_e,\lambda_e)$ is obtained from a phylogenetic tree
  $(T,\lambda)$ by contracting the edge $e$ in $T$ and setting
  $\lambda_e(e')=\lambda(e')$ for all $e'\ne e$ and suppose that
  $G(T,\lambda)$ is connected. Then $G(T,\lambda)=G(T_e,\lambda_e)$ if and
  only if $e$ is an interior edge of $T$ and $\lambda(e)=0$.
\end{lemma}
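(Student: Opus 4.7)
The plan is to prove the equivalence by handling each direction separately, and in the ``only if'' direction to split into the cases of leaf edges and nonzero-weight interior edges.

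For the easy ``if'' direction, assume $e$ is an interior edge with $\lambda(e)=0$. Then every leaf of $T$ remains a leaf of $T_e$, and for any two leaves $x,y\in L(T)$ the unique path $\mathcal{P}_{T_e}(x,y)$ is obtained from $\mathcal{P}_T(x,y)$ by either deleting the zero-weight edge $e$ (if the path traverses $e$) or leaving it unchanged. In either case the sum of edge weights along the path is preserved, so $d_{T,\lambda}(x,y)=d_{T_e,\lambda_e}(x,y)$ for all pairs of leaves, and hence $x\Rt y$ in $(T,\lambda)$ iff $x\Rt y$ in $(T_e,\lambda_e)$. This gives $G(T,\lambda)=G(T_e,\lambda_e)$.

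For the ``only if'' direction I would argue by contrapositive. First suppose $e$ is a leaf edge, with endpoints $x$ (a leaf) and $u$ (an interior vertex). Because $T$ is phylogenetic, $\deg_T(u)\ge 3$, so after contraction the merged vertex has degree $\ge 2$ and is not a leaf of $T_e$. Consequently $L(T_e)=L(T)\setminus\{x\}$, and the two explained graphs have different vertex sets, contradicting $G(T,\lambda)=G(T_e,\lambda_e)$. Next suppose $e$ is an interior edge with $\lambda(e)=w>0$. Removing $e$ from $T$ partitions $L(T)$ into two non-empty sets $L_1$ and $L_2$ corresponding to the leaves of the two components of $T-e$. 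Since $G(T,\lambda)$ is connected by hypothesis and both $L_i$ are non-empty, there must exist $x\in L_1$, $y\in L_2$ with $xy\in E(G(T,\lambda))$, i.e.\ $d_{T,\lambda}(x,y)=2$. The path $\mathcal{P}_T(x,y)$ contains $e$, so after contraction $d_{T_e,\lambda_e}(x,y)=2-w<2$, whence $xy\notin E(G(T_e,\lambda_e))$, again a contradiction.

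The main (but mild) obstacle is the connectedness step in the last case: we need that at least one edge of $G$ actually crosses $e$, and this is exactly what the hypothesis ``$G(T,\lambda)$ is connected'' supplies, since otherwise the leaves in $L_1$ would be in a different component from those in $L_2$. Everything else is a routine bookkeeping of how contraction affects leaf sets and path weights.
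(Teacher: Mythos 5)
Your proof is correct and follows essentially the same route as the paper's: the ``if'' direction is the observation that an interior $0$-edge contributes nothing to any leaf-to-leaf path, and the ``only if'' direction uses exactly the paper's key step, namely that connectedness of $G$ forces some adjacent pair $x,y$ with $d_{T,\lambda}(x,y)=2$ whose path crosses $e$, so a positive weight on $e$ would destroy that edge. You merely make explicit two points the paper leaves terse (the leaf-set change when contracting a pendant edge of a phylogenetic tree, and the bipartition argument supplying the crossing edge), which is fine.
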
 
\begin{proof}
  We have already noted the contracting an inner $0$-edge does not change
  the graph. By definition, leaf-edges cannot be contracted, since the
  vertices of $G$ correspond to the leaves of $T$. Connectedness of $G$
  implies that there is a pair of vertices $x,y$ whose connecting path runs
  through $e$ and whose distance \NEW{$d_{T,\lambda}(x,y)=2$}.  The
  contraction of $e$ only leaves this distance unaffected if
  $\lambda(e)=0$. Otherwise \NEW{$d_{T,\lambda}(x,y)$} changes, which
  implies that $x$ and $y$ become disconnected in $G'$ and hence the graph
  by the modified tree is different from $G$.
\end{proof}

\begin{figure}[b]
  \begin{center}
    \includegraphics[width=0.8\textwidth]{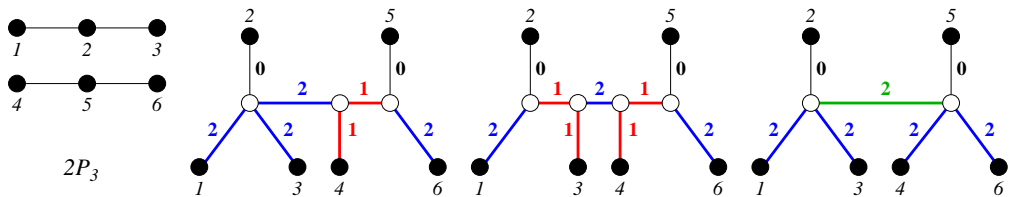}
  \end{center}
  \caption{Three alternative, topologically different canonical trees
    explaining the non-connected graph $2P_3$. These are derived from the
    unique tree explaining $P_3$ in Fig.~\ref{fig:P2K3}.}
  \label{fig:2P3}
\end{figure}

We note that connectedness of $G$ is necessary in Lemma~\ref{lem:least}
since for non-connected $G$, the connected components can be ``glued
together'' with arbitrarily complex trees as long as the distances between
the attachment points is at least $3$. In such examples it can be possible
to contract edges without changing the explained graph. There are, for
example at least three topologically different canonical trees that explain
$2P_3$, see Fig.~\ref{fig:2P3}.

\begin{lemma}
  Let $(T,\lambda)$ be a canonical tree explaining a connected graph $G$
  and let $x$ be an interior vertex in $T$. Then all edges incident to $x$
  are 1-edges or $p$ has at least one adjacent leaf $u$ with
  $\lambda(pu)=0$.
\end{lemma}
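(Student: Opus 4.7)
The plan is to argue by contradiction: suppose that some edge $e=xv$ incident to $x$ has $\lambda(e)\ne 1$ \emph{and} that no leaf adjacent to $x$ is attached by a $0$-edge. I want to show this forces $G$ to be disconnected, contradicting the hypothesis. The underlying intuition is that an edge at $x$ with label $\ge 2$ behaves like a ``separator'' for the $\Rt$-relation, and the only way to offset this inside a connected graph is to have a $0$-leaf at $x$ that can be used to reach the far side in distance exactly~$2$.

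First I would dispose of the label value $0$. If $\lambda(e)=0$, then canonicity of $(T,\lambda)$ forbids $e$ from being an interior edge, so $v$ is a leaf; but then $v$ is itself a leaf neighbor of $x$ with $\lambda(xv)=0$, directly contradicting the assumption. Hence we may assume $\lambda(e)\ge 2$.

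Next I would analyze the two components of $T-e$: let $T_v$ be the component containing $v$ and $T_x$ the component containing $x$. The core computation is simply that for any leaf $y\in L(T_v)$ and any leaf $u\in L(T_x)$,
\begin{equation*}
d_{T,\lambda}(y,u) \;=\; d_{T,\lambda}(y,v)+\lambda(e)+d_{T,\lambda}(x,u) \;\ge\; \lambda(e) \;\ge\; 2,
\end{equation*}
with equality in the $\Rt$-condition forcing $d_{T,\lambda}(y,v)=0$ and $d_{T,\lambda}(x,u)=0$. The latter equality would require $u$ to be a leaf attached to $x$ by a $0$-edge, which by assumption does not exist. Therefore no pair $(y,u)$ with $y\in L(T_v)$, $u\in L(T_x)$ lies in relation $\Rt$, i.e., there are no edges of $G$ crossing $e$.

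Finally I need that both sides of $e$ actually contain leaves. The component $T_v$ always contains a leaf (it is a non-empty finite tree). For $T_x$, canonicity implies $T$ is phylogenetic, so the interior vertex $x$ has degree $\ge 3$ and hence at least two neighbors different from $v$; following those directions reaches leaves in $T_x$. Thus $L(G)$ splits into two non-empty parts with no edges between them, contradicting connectedness of $G$. The main thing to watch for is not misusing canonicity: it is needed precisely to exclude the case $\lambda(e)=0$ on an interior edge and to guarantee that $x$ has degree $\ge 3$; otherwise the computation with path weights is routine.
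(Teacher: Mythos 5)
Your proof is correct and rests on the same mechanism as the paper's own argument: connectedness forces any edge of weight $\geq 2$ at the interior vertex to be compensated by a $0$-weight partner edge at that vertex, and canonicity (no interior $0$-edges) forces that partner to be a leaf edge, leaving exactly the two alternatives of the statement. You merely package this as a contradiction on a single separating edge rather than the paper's case analysis over the whole star at $p$, and your write-up is, if anything, more carefully justified than the paper's terse version.
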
 
\begin{proof}
  Suppose $p$ has no incident leaf. Since $G$ is connected, for every edge
  $e'$ there is another edge $e''$ such that
  $\lambda(e')+\lambda(e'')=2$. For this pair of edges we have
  $\lambda(e')=\lambda(e'')=1$ because no interior edges is
  $0$-labeled. Thus $\lambda(e')=1$ for all $e'$ incident with $x$.  On the
  other hand, if $p$ has a neighbor $u$ with $\lambda(pu)=2$, then
  connectedness of $G$ implies that there is another neighbor $y$ of $p$
  with $\lambda(yp)=0$. Hence, unless $p$ has only $1$-neighbors, then
  there must be a least one incident $0$-edge, which by assumption must be
  a leaf.
\end{proof}

We remark, finally, that a tree with minimal number of vertices (or edges)
that explains a graph with respect to the exactly-$2$-relation is
necessarily canonical.  Otherwise, the contraction of an edge would make it
possible to decrease of both the number of edges and vertices.
  
\section{Oriented Exactly-2-Relation}

Generalizing the construction of the oriented exactly-1-relation in
\cite{Hellmuth:17q}, we consider here an oriented version of the
exactly-$k$-relation. In constrast to the previous sections, we consider
here rooted trees $T$ with leaf set $L$. For two leaves $x$ and \NEW{$y$}
there is a unique \emph{least common ancestor}, denoted by $\lca{x,y}$,
defined as the vertex most distant from the root $r$ of
$\overrightarrow{T}$ that is common to the paths connecting $r$ with $x$
and $r$ with $y$, respectively.

\begin{defi} 
  Let $(\overrightarrow{T},\lambda)$ be a rooted tree with leaf set $L$ and
  edge-labeling function $\lambda: E(\overrightarrow{T})\to\mathbb{N}_0$.
  For $x,y\in L$ we consider the \emph{directed exactly-$k$-relation
    $\Rkd$} defined by \emph{$x \Rkd y$} if
  $\sum_{e\in\mathcal{P}(x,\lca{x,y})} \lambda(e)=0$ and
  $\sum_{e\in\mathcal{P}(\lca{x,y},y)} \lambda(e)=k$ holds for the the
  (unique) paths $\mathcal{P}(x,\lca{x,y})$ from $x$ to $\lca{x,y}$ and
  $\mathbb(\lca{x,y},y)$ from $\lca{x,y}$ to $y$, respectively.  
  \newline
  The rooted tree $(\overrightarrow{T},\lambda)$ \emph{explains} a the
  directed graph $\overrightarrow{G}(L,E)$ \emph{(with respect to the
    directed exactly-$k$-relation)} if $(x,y)\in E(G)$ if and only if
  $x\Rkd y$.
\end{defi}
By construction $\overrightarrow{G}(L,E)$ is an \emph{oriented} graph,
i.e., at most one of $(x,y)$ and $(y,x)$ can be an edge. As in the unrooted
case, we say that a rooted tree $(\overrightarrow{T},\lambda)$ is
\emph{canonical} if it is a rooted phylogenetic tree and does not have an
inner 0-edge.  In the following we will consider the case that $\Ro$ is
discrete. As in the undirected case, we shall relax this requirement in the
end.

As in \cite{Hellmuth:17q}, our strategy is to exploit the close
relationships between the oriented and the undirected case. Therefore, we
first derive some technical results regarding common properties of the
oriented relation $\Rkd$ and its undirected relative $\Rk$.

Note that the underlying tree $(T,\lambda)$ of a rooted canonical tree
$(\overrightarrow{T},\lambda)$ is not necessarily an unrooted canonical
tree. By contracting all the interior 0-edges and degree 2 vertices, we get
a unique unrooted canonical tree $(T',\lambda')$ corresponds to
$(\overrightarrow{T},\lambda)$. Conversely, for any unrooted canonical tree
$(T,\lambda)$ with $|V(T)|>1$, we can create a set $\mathbb{T}(T,\lambda)$
of corresponding rooted least resolved trees as follows: (i) each interior
vertex of $(T,\lambda)$ may serve as a root; (ii) each leaf attached by a
0-edge may serve as a root; and (iii) every 2-edge can be subdivided by
inserting a the root as a new vertex such that each of the two resulting
edges is labeled 1.  The construction is detailed in
Algorithm~\ref{alg:rooted}. An example is given in Figure~\ref{fig:roots}.
The following lemma formalizes this one-to-one correspondence between
unrooted canonical trees $(T,\lambda)$ and its corresponding sets of rooted
canonical tree.

\begin{algorithm}
  \caption{Compute the set of canonical rooted trees $(\mathbb{T,\lambda})$}
  \label{alg:rooted}
  \begin{algorithmic}[1]
  \REQUIRE unrooted canonical tree $(T,\lambda)$ with $|V(T)|>1$
  \STATE $(\mathbb{T,\lambda}) \leftarrow \emptyset$
  \FORALL {interior vertices $v\in T$} 
     \STATE designate $v$ as root 
     \STATE add the rooted tree to $(\mathbb{T,\lambda})$
     \ENDFOR
  \FORALL {leaf vertices $v\in T$ with $\lambda(vw)>0$ where $N(v)=\{w\}$}
     \STATE subdivide $vw$ to $vv^*w$ and designate $v^*$ as root
     \STATE relabel as $\lambda(vv^*)\leftarrow\lambda(vw)$ and 
            $\lambda(v^*w)\leftarrow 0$, designate $v^*$ 
     \STATE add the resulting rooted tree to $(\mathbb{T,\lambda})$.
  \ENDFOR   
  \FORALL {edges $e=uv$ with $\lambda(e)=k>1$}
     \STATE subdivide the edge $e$ by inserting $v^*$ and designate $v^*$ as
            the root
     \FOR {$j=1...k-1$} 
        \STATE $\lambda(uv^*)\leftarrow j$ and 
            $\lambda(v^*v)\leftarrow k-j$
        \STATE add the resulting rooted tree to $(\mathbb{T,\lambda})$.
     \ENDFOR 
  \ENDFOR 
  \RETURN $(\mathbb{T,\lambda})$
\end{algorithmic}
\end{algorithm}

\begin{lemma} \label{lem:allrooted} 
  Every rooted canonical tree can be constructed from its underlying
  unrooted canonical tree by Algorithm ~\ref{alg:rooted}.
\end{lemma}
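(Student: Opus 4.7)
The plan is to prove the lemma by taking an arbitrary canonical rooted tree $(\overrightarrow{T},\lambda)$, identifying the unique underlying unrooted canonical tree $(T',\lambda')$, and exhibiting a specific iteration of the outer loops of Algorithm~\ref{alg:rooted}, applied to $(T',\lambda')$, whose output is exactly $(\overrightarrow{T},\lambda)$. The unrooting operation---forgetting the root and then contracting all interior $0$-edges and all degree-$2$ vertices---is well-defined and unique by the discussion preceding the algorithm. Because $(\overrightarrow{T},\lambda)$ is canonical and hence has no inner $0$-edges, the only contraction ever triggered is that of the root $r$ itself, and only in the case that $r$ has degree exactly two in $\overrightarrow{T}$. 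Every edge label not incident to $r$ is inherited verbatim by $(T',\lambda')$, so reconstructing $(\overrightarrow{T},\lambda)$ amounts to reinserting $r$ in the correct position with the correct weights.

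I proceed by a case analysis on the degree of $r$ in $\overrightarrow{T}$. If $\deg(r)\geq 3$, then $r$ survives as an interior vertex of $(T',\lambda')$ with all incident labels preserved, and the first \textsc{forall} loop (case~(i)) applied with $v=r$ designates $r$ as root and recovers $(\overrightarrow{T},\lambda)$ immediately. If $\deg(r)=2$ with children $c_1,c_2$ of weights $a=\lambda(rc_1)$ and $b=\lambda(rc_2)$, then the unrooting merges $rc_1$ and $rc_2$ into a single edge $c_1c_2$ in $(T',\lambda')$ of weight $k=a+b$. Canonicity of $\overrightarrow{T}$ forces $\lambda(rc_i)\geq 1$ whenever $c_i$ is interior. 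Splitting further on how many of $c_1,c_2$ are leaves of $T'$, I verify that in each situation either case~(iii) of Algorithm~\ref{alg:rooted} with an appropriate choice of the split index $j$, or case~(ii) triggered by one of the two endpoints being a leaf with a positive-weight edge in $T'$, recreates exactly the weight pair $(a,b)$ on the two sides of the reinserted root.

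The main obstacle lies in the sub-case of Case~B in which $r$ has one leaf child attached by a zero-weight edge and one interior child attached by a positive-weight edge. Here the split index required to recover $(\overrightarrow{T},\lambda)$ lies at the boundary of the range enumerated by case~(iii), so that only case~(ii) can produce the configuration. One has to verify carefully that the subdivision step of case~(ii), applied to the positive-weight leaf edge of $(T',\lambda')$ that emerged from the merger at $r$, assigns the weights on the two halves in exactly the orientation dictated by canonicity---the positive part on the inner side of the inserted root and zero on the leaf side---and not the reverse. Once this orientation is confirmed, together with the straightforward cases of $\deg(r)\ge 3$ and of both children being interior or both being leaves (where $|V(T')|=2$ and cases~(ii) and~(iii) jointly enumerate all valid splits of $k$), the case analysis is exhaustive and the lemma follows.
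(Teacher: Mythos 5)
Your overall strategy --- a case analysis on the degree of the root $r$, showing that forgetting $r$ (and suppressing it when it has degree two) yields the canonical unrooted tree, and that one of the three branches of Algorithm~\ref{alg:rooted} then reinserts $r$ with the correct weights --- is sound, and it is considerably more explicit than the paper's own proof, which disposes of surjectivity in a single sentence by essentially restating the claim. Your handling of $\deg(r)\geq 3$ via case~(i), and of a degree-two root with two interior children via case~(iii) (where canonicity forces both parts of the split to be at least $1$ and hence within the range $j=1,\dots,k-1$), is correct.

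The gap sits exactly at the point you flag and then defer. For a degree-two root with a leaf child $c_1$, $\lambda(rc_1)=0$, and an interior child $c_2$, $\lambda(rc_2)=b\geq 1$, you state that one must check that case~(ii) places the positive weight on the inner side and zero on the leaf side, and you then write ``once this orientation is confirmed'' without confirming it. Carrying out that check against the pseudocode as printed, it fails: the relabeling step in the second loop of Algorithm~\ref{alg:rooted} sets $\lambda(vv^*)\leftarrow\lambda(vw)$ and $\lambda(v^*w)\leftarrow 0$ with $v$ the leaf and $w$ its interior neighbor, i.e.\ it puts the full weight on the leaf-side edge and zero on the inner-side edge --- the reverse of what your reconstruction needs, and moreover a rooted tree with an inner $0$-edge, hence not canonical. (The prose preceding the algorithm, ``each leaf attached by a $0$-edge may serve as a root,'' describes yet a third operation.) So the verification you postpone is not a formality: either the assignment in case~(ii) contains a transposition that must be identified and corrected for the lemma to hold, or the configuration $\lambda(rc_1)=0$, $\lambda(rc_2)=b$ is never produced and surjectivity fails. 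A complete proof must resolve this rather than assume it. A second, minor omission: when both children of a degree-two root are leaves attached by $0$-edges (so $|L|=2$ and $T'$ is a single edge of weight $0$), none of cases~(i)--(iii) applies, so this canonical rooted tree is not generated at all; your parenthetical claim that cases~(ii) and~(iii) jointly enumerate all valid splits silently assumes $k\geq 1$.
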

\begin{proof}
  By construction, the set of canonical rooted trees corresponding to
  unrooted canonical tree is well defined, i.e., the correspondence is a
  mapping.

  Suppose there are two distinct unrooted canonical trees $(T_1,\lambda_1)$
  and $(T_2,\lambda_2)$ such that both their correspondings sets of rooted
  trees contains $(\overrightarrow{T},\lambda)$. By construction, it has a
  underlying tree $(T,\lambda)$ from which a unique canonical tree is
  obtained by contracting 0-edges and degree 2 vertices. Thus
  $(T_1,\lambda_1)=(T_2,\lambda_2)$, a contradiction. Hence the mapping is
  injective.
  
  The mapping is also surjective, since each rooted canonical tree
  $(\overrightarrow{T},\lambda)$ can be constructed from its corresponding
  unrooted canonical tree.
\end{proof}

\begin{lemma} \label{lem:subgraph} Suppose the unrooted canonical tree
  $(T,\lambda)$ explains $G$ with respect to $\Rk$. Let
  $\overrightarrow{G}$ be a digraph explained w.r.t.\ $\Rkd$ by a rooted
  tree $(\overrightarrow{T},\lambda)$ corresponding to $(T,\lambda)$.
  Then the underlying graph of $\overrightarrow{G}$ is a spanning 
  subgraph of $G$.
\end{lemma}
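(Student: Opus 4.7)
The plan is to observe that the correspondence between $(T,\lambda)$ and any rooted tree $(\overrightarrow{T},\lambda)\in\mathbb{T}(T,\lambda)$ produced by Algorithm~\ref{alg:rooted} is weight-preserving on leaf-to-leaf paths, and then to read off the implication $\Rkd\Rightarrow\Rk$ directly from the definitions.

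First I would verify the weight-preservation claim case by case according to Alg.~\ref{alg:rooted}. In case (i) an interior vertex of $T$ is merely designated as a root, so no edge is altered and path weights between leaves are unchanged. In case (ii) the pendant edge $vw$ is subdivided as $v\,v^*\,w$ with $\lambda(vv^*)=\lambda(vw)$ and $\lambda(v^*w)=0$; any path between two leaves that used $vw$ now uses $vv^*$ followed by $v^*w$ and picks up weight $\lambda(vw)+0=\lambda(vw)$, so again nothing changes. In case (iii) an edge $e=uv$ of weight $k$ is subdivided into $uv^*$ and $v^*v$ with labels $j$ and $k-j$; their sum equals $k$, so once more the total weight of every leaf-to-leaf path is preserved. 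Consequently, for all $x,y\in L$,
\begin{equation*}
\sum_{e\in\mathcal{P}_{\overrightarrow{T}}(x,y)}\lambda(e)
\;=\;
\sum_{e\in\mathcal{P}_{T}(x,y)}\lambda(e).
\end{equation*}

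Next I would translate $\Rkd$ into an $\Rk$-statement. Suppose $(x,y)$ is an arc of $\overrightarrow{G}$, i.e.\ $x\Rkd y$. By definition this means $\sum_{e\in\mathcal{P}(x,\mathrm{lca}(x,y))}\lambda(e)=0$ and $\sum_{e\in\mathcal{P}(\mathrm{lca}(x,y),y)}\lambda(e)=k$. Since the unique undirected path $\mathcal{P}_{\overrightarrow{T}}(x,y)$ in $\overrightarrow{T}$ is the concatenation of these two segments, its total weight is exactly $k$. By the weight-preservation observation above, the unique $x$-$y$ path in $T$ also has total weight $k$, so $x\Rk y$ in $(T,\lambda)$ and hence $\{x,y\}\in E(G)$.

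Since the vertex set of $\overrightarrow{G}$ and of $G$ is the common leaf set $L$ of $\overrightarrow{T}$ and $T$, the argument shows that the underlying graph of $\overrightarrow{G}$ has the same vertex set as $G$ and every one of its edges lies in $E(G)$, i.e.\ it is a spanning subgraph. The only mildly delicate point is the case analysis for Alg.~\ref{alg:rooted}, since one has to confirm that the subdivision variants do not change any leaf-to-leaf distance; all three cases are essentially immediate, and no genuine combinatorial obstacle arises.
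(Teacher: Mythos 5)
Your proof is correct and follows essentially the same route as the paper: both arguments note that the vertex sets coincide (same leaf set) and that an arc $x\to y$ forces the undirected $x$--$y$ path weight to be $0+k=k$, which transfers back to $(T,\lambda)$ because rooting does not change leaf-to-leaf path weights. The only cosmetic difference is that you verify weight preservation by a direct case analysis of Algorithm~\ref{alg:rooted}, whereas the paper appeals to the ``displays'' relation and Lemma~\ref{lem:hered}; both justifications are sound.
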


\begin{proof}
  By construction, $(T,\lambda)$ and $(\overrightarrow{T},\lambda)$ has the
  same leaf set, and hence $V_G= V_{\overrightarrow{G}}$.
 
  Any arc $x\to y$ in $\overrightarrow{G}$ is an edge in the underlying
  graph of $\overrightarrow{G}$ because that fact that
  $(\overrightarrow{T},\lambda)$ explains $\overrightarrow{G}$ implies
  $\sum_{e\in\mathcal{P}(x,\lca{x,y})} \lambda(e)=0$ and
  $\sum_{e\in\mathcal{P}(\lca{x,y},y)} \lambda(e)=k$. Considering the
  underlying unrooted graph $(T',\lambda')$, we have
  $\sum_{e\in\mathcal{P}(x,y)} \lambda'(e)=k$. Since $(T,\lambda)$ explains
  $G$, and by construction $(T',\lambda')$ displays $(T,\lambda)$, we
  conclude that $(T',\lambda')$ also explains $G$. Hence $(x,y)\in E(G)$.
\end{proof}

\begin{figure}[htbp!]
  \begin{center}
  \includegraphics[width=\textwidth]{./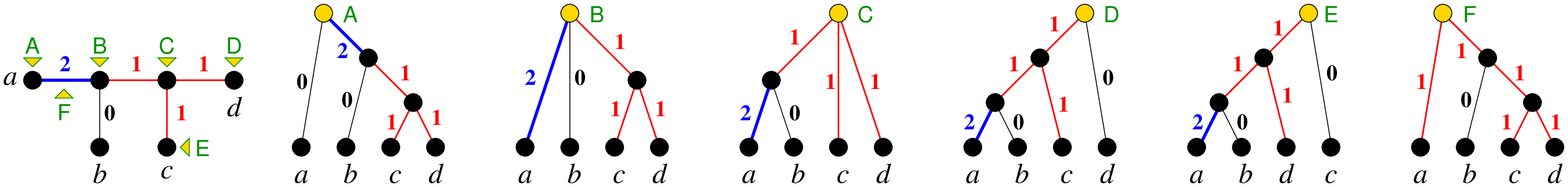}
  \end{center}
  \caption{Construction of rooted canonical trees \textsf{A} through
    \textsf{F} from the unrooted canonical tree on the left. The possible
    positions of the root are indicated by the triangles and the
    corresponding six rooted trees are shown to the right.}
  \label{fig:roots}
\end{figure}

\begin{defi}
  Suppose there a tree $(T,\lambda)$ with discrete $\Ro$ that explains $G$
  w.r.t.\ $\Rk$, then every subgraph $H$ of $G$ is \emph{allowed} for
  $\Rk/\Ro$. Analogously, if there is a rooted tree
  $(\overrightarrow{T},\lambda)$ with discrete $\Ro$ that explains
  $\overrightarrow{G}$ w.r.t.\ $\Rkd$, we say that every subgraph
  $\overrightarrow{H}$ of $\overrightarrow{G}$ is allowed for $\Rk/\Ro$.
\end{defi}
In more detail a graph $H$ is allowed for $\Rk/\Ro$ if there exists
$(T,\lambda)$ such that for any $(x,y)\in E(G)$, we have
$\sum_{e\in\mathcal{P}(x,y)} \lambda(e)=k$.  If $G$ is not allowed for
$\Rk/\Ro$, we say that is it is \emph{forbidden (as a subgraph)} for
$\Rk/\Ro$. Analogous, a graph $\overrightarrow{H}$ is allowed for
$\Rkd/\Ro$ in the rooted case, if there exists $(T,\lambda)$ such that for
any $(x,y)\in E(G)$, we have $\sum_{e\in\mathcal{P}(x,\lca{x,y})}
\lambda(e)=0$ and $\sum_{e\in\mathcal{P}(\lca{x,y},y)} \lambda(e)=k$.  If
$\overrightarrow{G}$ is not allowed as a subgraph in $G(\Rkd)/\Ro$, we that
say $G$ is \emph{forbidden (as a subgraph)} for in $\Rkd/\Ro$.

\begin{lemma}
  \label{lem:underly}
  If $G$ is forbidden for $\Rk/\Ro$, then any orientation of $G$ is
  forbidden in $\Rkd/\Ro$.  If $\overrightarrow{G}$ is allowed as a
  subgraph for $\Rkd/\Ro$ with rooted tree $(\overrightarrow{T},\lambda)$,
  then its underlying graph is allowed for $\Rk/\Ro$ as a subgraph with the
  corresponding underlying tree $(T,\lambda)$.
\end{lemma}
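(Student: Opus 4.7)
The plan is to establish the second implication directly and obtain the first as its contrapositive. The two parts are clearly equivalent after negation: the first says no orientation of a forbidden $G$ can be allowed, while the second says every allowed orientation has an allowed underlying graph.

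For the second statement, suppose $\overrightarrow{H}$ is allowed for $\Rkd/\Ro$, i.e., there is a rooted canonical tree $(\overrightarrow{T},\lambda)$ with discrete $\Ro$ that explains some digraph $\overrightarrow{G}$ containing $\overrightarrow{H}$ as a subgraph. I would first pass from $(\overrightarrow{T},\lambda)$ to the associated unrooted canonical tree $(T,\lambda)$ by forgetting the root and contracting interior $0$-edges and suppressing degree-$2$ vertices, exactly as in the construction preceding Lemma~\ref{lem:allrooted}. By Lemma~\ref{lem:canonical} these operations preserve the total path-weights between any two leaves, so $(T,\lambda)$ is canonical and explains some undirected graph $G$ with respect to $\Rk$; the leaf set is unchanged, so $\Ro$ also remains discrete. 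Now Lemma~\ref{lem:subgraph}, applied to this matching pair, yields that the underlying graph of $\overrightarrow{G}$ is a spanning subgraph of $G$. Since forgetting orientations is monotone under taking subgraphs, the underlying graph $H$ of $\overrightarrow{H}$ is a subgraph of the underlying graph of $\overrightarrow{G}$, and therefore a subgraph of $G$. Thus $H$ is allowed for $\Rk/\Ro$ with witness tree $(T,\lambda)$.

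The first statement is then the contrapositive: if $G$ is forbidden for $\Rk/\Ro$ but some orientation $\overrightarrow{G}$ of $G$ were allowed for $\Rkd/\Ro$, the second part would force the underlying graph of $\overrightarrow{G}$, which is $G$ itself, to be allowed for $\Rk/\Ro$, a contradiction. The only real work lies in justifying the passage between rooted and unrooted canonical trees, which is already packaged into Lemmas~\ref{lem:canonical}, \ref{lem:allrooted}, and~\ref{lem:subgraph}; I do not anticipate a serious obstacle. The one subtlety worth double-checking is that discreteness of $\Ro$ transfers from $(\overrightarrow{T},\lambda)$ to $(T,\lambda)$, but this is immediate because $\Ro$ depends only on leaf-to-leaf distances, which are preserved by the contractions.
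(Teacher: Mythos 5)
Your proof is correct and follows essentially the same route as the paper: both arguments pass from the rooted explaining tree to its underlying unrooted canonical tree and observe that each arc's split path-weight condition ($0$ to the $\lca{}$, $k$ from the $\lca{}$) collapses to total path weight $k$, so the underlying graph is explained/allowed. The only difference is bookkeeping --- you prove the second implication directly and obtain the first as its contrapositive, whereas the paper proves the first by contradiction and derives the second from it; your explicit appeal to Lemma~\ref{lem:subgraph} and the check that discreteness of $\Ro$ transfers are, if anything, slightly more careful than the paper's version.
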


\begin{proof}
  Suppose, for contradictions, that $G$ is forbidden for $\Rk/\Ro$ but the
  orientation $\overrightarrow{G}$ of $G$ is allowed for $\Rkd/\Ro$. Then
  there exists a rooted tree ${\overrightarrow{T},\lambda}$ such that for
  any arc $x\to y$ in $\overrightarrow{G}$, we have
  $\sum_{e\in\mathcal{P}(x,u)} \lambda(e)=0$ and
  $\sum_{e\in\mathcal{P}(u,y)} \lambda(e)=k$ where $u=\lca{x,y}$.  Consider
  the unrooted tree $(T,\lambda)$ of $(\overrightarrow{T},\lambda)$. Since
  $(x,y)\in E(G)$ if and only if $x\to y$ or $y\to x$ is an arc in
  $\overrightarrow{G}$, then for any $(x,y)\in E(G)$,
  $\sum_{e\in\mathcal{P}(x,u)} \lambda(e)=0$ and
  $\sum_{e\in\mathcal{P}(u,y)} \lambda(e)=k$ where $u=\lca{x,y}$. By
  definition, $G$ is allowed for $\Rk/\Ro$, i.e., we arrive at a
  contradiction.  The second statement is a simple consequence of the first
  one.
\end{proof}

The technical results obtained so far will allow us to infer properties of
the oriented graph $\overrightarrow{G}$ and their explaining trees
$(\overrightarrow{T},\lambda)$ from their underlying undirected graphs $G$
and unrooted trees $(T,\lambda)$. In the following we will focus on graphs
$\overrightarrow{G}$ that can be explained w.r.t.\ $\Rtd$ by a rooted tree
$(\overrightarrow{T},\lambda)$ with discrete $\Ro$.

\begin{lemma} 
  \label{lem:cycle}
  Oriented cycles are forbidden as a subgraph for $\Rtd/\Ro$.
\end{lemma}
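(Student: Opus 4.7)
My plan is to attach a \emph{weighted depth} to every vertex of a rooted canonical tree and observe that each arc forces the depth of its target to exceed that of its source by exactly $2$. Iterating around a directed cycle then yields an immediate numerical contradiction.

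More precisely, let $\overrightarrow{G}$ be an oriented graph explained by a rooted canonical tree $(\overrightarrow{T},\lambda)$ with root $r$, and define $h(v) := \sum_{e\in\mathcal{P}(r,v)} \lambda(e)$ for every $v\in V(\overrightarrow{T})$. For any arc $x\to y$ in $\overrightarrow{G}$, the vertex $u := \lca{x,y}$ lies on both root-to-leaf paths $\mathcal{P}(r,x)$ and $\mathcal{P}(r,y)$, so
\begin{equation*}
  h(x) = h(u) + \sum_{e\in\mathcal{P}(u,x)} \lambda(e), \qquad
  h(y) = h(u) + \sum_{e\in\mathcal{P}(u,y)} \lambda(e).
\end{equation*}
By the definition of $\Rtd$ the first sum is $0$ and the second is $2$, hence $h(y)-h(x)=2$ for every arc of $\overrightarrow{G}$. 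If $\overrightarrow{G}$ contained a directed cycle $x_1\to x_2\to\cdots\to x_n\to x_1$, telescoping this equality around the cycle would give $h(x_1)=h(x_1)+2n$, which is absurd for $n\ge 1$.

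For completeness, if ``oriented cycle'' is read broadly as \emph{any} orientation of a cycle $C_p$, the cases $p\ge 5$ are already handled by Lemma~\ref{lem:C_p} together with Lemma~\ref{lem:underly}, and $p=4$ by Lemma~\ref{lem:forbidden4} together with Lemma~\ref{lem:underly}; only orientations of $C_3$ remain, and among those only the consistently oriented $3$-cycle $x_1\to x_2\to x_3\to x_1$ is a directed cycle, which is ruled out by the depth argument above.

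I do not expect a genuine obstacle: once one notices that the two constraints in the definition of $\Rtd$ together fix $h(y)-h(x)=+2$ for every arc, the statement is a one-line parity/monotonicity argument. The only point to be careful about is using that $\mathcal{P}(r,x)$ actually passes through $\lca{x,y}$, which is immediate from the definition of the least common ancestor in a rooted tree.
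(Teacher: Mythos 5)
Your height-function argument is correct and, for \emph{directed} (consistently oriented) cycles, it is cleaner and more general than the paper's proof: it needs neither the assumption that $\Ro$ is discrete nor any detour through Hamiltonicity. The paper instead argues that the underlying graph of any explained $\overrightarrow{H}$ containing the oriented cycle as a spanning subgraph has a Hamiltonian cycle, hence is $K_n$ by Lemma~\ref{lem:Hamilton}, hence is explained only by $(S_n,1)$ by Lemma~\ref{lem:Sn}; it then checks that every rooted tree obtained from $(S_n,1)$ via Algorithm~\ref{alg:rooted} explains either the empty digraph or an out-oriented star, neither of which contains a cycle in its underlying graph. That argument covers \emph{every} orientation of $C_n$, and the broad reading is the one actually needed downstream, where Lemma~\ref{lem:cycle} is used to conclude that the underlying graph of $\overrightarrow{G}$ is a forest.

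This is where your proposal has a genuine gap. Your fallback for non-consistently-oriented cycles invokes Lemma~\ref{lem:C_p} (resp.\ Lemma~\ref{lem:forbidden4}) together with Lemma~\ref{lem:underly}, but those lemmas assert that $C_p$ cannot be \emph{explained}, which is strictly weaker than being \emph{forbidden as a subgraph} in the paper's sense: a graph is ``allowed as a subgraph'' as soon as it is a subgraph of \emph{some} explainable graph on its vertex set. Since $C_p$ is a spanning subgraph of $K_p$, which is explained by $(S_p,1)$, the undirected cycle $C_p$ is in fact \emph{allowed} for $\Rt/\Ro$, so Lemma~\ref{lem:underly} yields nothing about its orientations. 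Concretely, your argument does not exclude the orientation $v_1\to v_2\leftarrow v_3\to v_4\leftarrow v_1$ of $C_4$, for which your height constraints are perfectly consistent ($h(v_1)=h(v_3)=0$, $h(v_2)=h(v_4)=2$). You also leave the transitive triangle unaddressed, although your own telescoping disposes of it --- and indeed of every orientation of every \emph{odd} cycle, since the signed sum of increments $\pm 2$ around the cycle must vanish and hence forward and backward arcs must be equinumerous. For even cycles with balanced orientations, however, you need an additional argument, e.g.\ the paper's reduction to $(S_n,1)$, and the numerical height argument alone cannot supply it.
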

\begin{proof}
  Suppose $\overrightarrow{C_n}$ is allowed. Then, by definition, there
  exists an orientation graph $\overrightarrow{H}$ with vertex set
  $V(\overrightarrow{C_n})$ such that $\overrightarrow{C_n}$ is a subgraph
  of $\overrightarrow{H}$ and a rooted tree $(\overrightarrow{T},\lambda)$
  that explains $\overrightarrow{H}$.  W.l.o.g., we assume
  $(\overrightarrow{T_n},\lambda)$ is a rooted canonical tree.
 
  Consider the underlying unrooted canonical tree $(T,\lambda)$ of
  $(\overrightarrow{T_n},\lambda)$, we claim that $(T,\lambda)$ must be
  $(S_n,1)$.  Suppose that $(T,\lambda)$ explains graph $G$. By
  Lemma~\ref{lem:subgraph} since the underlying graph $H$ of
  $\overrightarrow{H}$ is a subgraph of $G$ which has the same vertex set
  with $H$, and $H$ contains a Hamiltonian cycle, thus $G$ also contains a
  Hamiltonian cycle, by Lemma \ref{lem:Hamilton} $G$ is a complete graph
  $K_n$.  And by Lemma~\ref{lem:Sn} the $(T,\lambda)$ displays $(S_n,1)$.
 
  Then we consider all the possibility to construct the set of rooted
  canonical trees corresponding to $(S_n,1)$, and consider the oriented
  graph it explains.  By Algorithm~\ref{alg:rooted} we can place the root
  either on the center vertex, which will explain an empty graph, or place
  it on one of the leaves, which will explains oriented star on $n$
  vertices point to the leaves. In either case there is no cycles.  By
  Lemma~\ref{lem:allrooted} we know we have constructed all rooted
  canonical trees and thus all oriented graphs they explain. Thus oriented
  cycles are forbidden as a subgraph for $\Rtd/\Ro$.
\end{proof}

\begin{lemma} \label{lem:forb} 2-star oriented to center,
  $\bullet\to\bullet\leftarrow\bullet$, is forbidden as an induced
  subgraph for $\Rtd/\Ro$.
\end{lemma}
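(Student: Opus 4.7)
The plan is to suppose for contradiction that a canonical rooted tree $(\overrightarrow{T},\lambda)$ with discrete $\Ro$ explains an oriented graph $\overrightarrow{G}$ containing $a\to b\leftarrow c$ as an induced subgraph, and to derive $d_{T,\lambda}(a,c)=0$ in the underlying unrooted tree, contradicting discreteness of $\Ro$. I would begin by setting $u=\lca{a,b}$ and $v=\lca{c,b}$ and unpacking the $\Rtd$-definitions: the paths $\mathcal{P}(a,u)$ and $\mathcal{P}(c,v)$ carry $\lambda$-weight $0$, while $\mathcal{P}(u,b)$ and $\mathcal{P}(v,b)$ carry $\lambda$-weight $2$.

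Since $u$ and $v$ both lie on the unique root-to-$b$ path in $\overrightarrow{T}$, they are comparable under the ancestor order; let $w$ be whichever of them is the ancestor. Decomposing $\mathcal{P}(w,b)$ through the lower of the two vertices yields that $\mathcal{P}(u,v)$ also has total weight $0$. I would next verify that $\lca{a,c}=w$ by case analysis. When $u=v=w$, canonicity (no interior $0$-edges) forces $a$ and $c$ to be distinct leaves attached to $w$ by $0$-labeled leaf edges, since any longer zero-weight path from a leaf to an interior vertex would have to traverse a positive interior edge; hence $\lca{a,c}=w$. When $u\ne v$, the lower of the two vertices lies on the $w$-to-$b$ branch, so $a$ sits in a subtree of $w$ disjoint from the one containing $c$ and $b$, and again $\lca{a,c}=w$. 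In both situations, concatenating the three zero-weight subpaths gives
\[
d_{T,\lambda}(a,c)
 = \mathrm{wt}(\mathcal{P}(a,u)) + \mathrm{wt}(\mathcal{P}(u,v)) + \mathrm{wt}(\mathcal{P}(v,c))
 = 0,
\]
so $a\Ro c$ with $a\ne c$, the desired contradiction.

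The main subtlety to handle is the boundary case in which one of $a,c$ coincides with a root-leaf of $\overrightarrow{T}$; here Algorithm~\ref{alg:rooted} ensures that the incident leaf-edge is itself $0$-labeled, so canonicity again rules out positive interior edges appearing along the relevant zero-weight paths, and the computation proceeds unchanged. I note in closing that the induced-subgraph hypothesis is never actually used—the contradiction is extracted from the arcs $a\to b$ and $c\to b$ alone—so the same argument in fact forbids this oriented $2$-star as a (not necessarily induced) subgraph.
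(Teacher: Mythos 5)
Your proof is correct and arrives at exactly the contradiction the paper intends: the two tails $a$ and $c$ of the arcs into the center satisfy $d_{T,\lambda}(a,c)=0$, i.e.\ $a\Ro c$ with $a\ne c$, contradicting discreteness of $\Ro$. The paper verifies this by enumerating the three rooted triples on the vertex set, whereas you derive it directly from the comparability of $u=\lca{a,b}$ and $v=\lca{c,b}$ on the root-to-$b$ path; your detour through canonicity to pin down $\lca{a,c}$ is sound but dispensable, since non-negativity of $\lambda$ already gives $d_{T,\lambda}(a,c)\le \mathrm{wt}(\mathcal{P}(a,u))+\mathrm{wt}(\mathcal{P}(u,v))+\mathrm{wt}(\mathcal{P}(v,c))=0$.
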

\begin{proof}
  Explicit construction shows that we obtain $v_1\Ro v_3$ for each of the
  three triples $v_1v_2|v_3$, $v_1v_3|v_2$, and $v_2v_3|v_1$. This
  contradicts the assumption that $\Ro$ is discrete.
\end{proof}

\begin{lemma}
  \NEW{Every graph $\overrightarrow{G}$ that can be explained by an
    edge-labeled tree w.r.t.\ $\Rtd/\Ro$ is} an oriented forest \NEW{with
    the property that all its} component trees have a unique source vertex
  from which all arcs are directed away.
\end{lemma}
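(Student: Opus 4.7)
The plan is to combine the two immediately preceding lemmas in a short three-step argument. First, by Lemma~\ref{lem:cycle} no orientation of a cycle can occur as a subgraph of $\overrightarrow{G}$, so the underlying undirected graph of $\overrightarrow{G}$ is acyclic, which already gives ``oriented forest''. In particular, each connected component $T$ of $\overrightarrow{G}$ with $n$ vertices carries exactly $n-1$ arcs, so the sum of in-degrees over $V(T)$ equals $n-1$.

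Next I would establish that every vertex has in-degree at most one. Suppose, for contradiction, that a vertex $v$ has two distinct in-neighbors $u_1,u_2$ with arcs $u_1\to v$ and $u_2\to v$. Because the underlying graph is a forest, $u_1$ and $u_2$ cannot be adjacent (any arc between them would close a triangle in the underlying tree). Therefore $\{u_1,v,u_2\}$ induces precisely the configuration $\bullet\to\bullet\leftarrow\bullet$ that is forbidden by Lemma~\ref{lem:forb}, a contradiction. Combined with the count $\sum_{x\in V(T)} \mathrm{indeg}(x)=n-1$, this in-degree bound forces exactly one vertex $s\in V(T)$ to have in-degree $0$ and every other vertex of $T$ to have in-degree exactly $1$.

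Finally I would verify that every arc in $T$ points away from $s$. Starting at any vertex $v\ne s$, iteratively following the unique incoming arc produces a backward walk; acyclicity of the underlying graph prevents vertex repetition, and finiteness therefore forces termination, which can occur only at the unique in-degree-$0$ vertex $s$. Reversing this walk yields a directed path from $s$ to $v$ whose underlying edge sequence is the (unique) $s$-$v$-path in $T$, so every edge of $T$ is oriented outward from $s$. The only subtlety is ensuring that Lemma~\ref{lem:forb} applies to $\{u_1,v,u_2\}$ as an \emph{induced} subgraph rather than merely as a subgraph, which is precisely why the forest property is invoked first to exclude an arc between $u_1$ and $u_2$.
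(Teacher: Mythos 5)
Your proposal is correct and follows essentially the same route as the paper: acyclicity of the underlying graph from Lemma~\ref{lem:cycle}, and uniqueness of the source from the forbidden configuration $\bullet\to\bullet\leftarrow\bullet$ of Lemma~\ref{lem:forb} (the paper phrases this as ``two sources would force the configuration somewhere on the path between them,'' whereas you phrase it as ``in-degree at most one'' plus an edge count, which is the same idea). Your explicit back-tracing argument that every arc points away from the unique in-degree-zero vertex is a detail the paper leaves implicit, so your write-up is, if anything, slightly more complete.
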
 
\begin{proof}
  \NEW{Let $\overrightarrow{G}$ be a graph that can be explained w.r.t.\
    $\Rtd/\Ro$.}  Since all cycles are forbidden induced subgraphs,
  $\overrightarrow{G}$ is a forest. Furthermore, there is only a single
  source vertex in each connected component. Otherwise, if both $x$ and $y$
  were sources within the same component tree, then the unique path from
  $x$ to $y$ would necessarily contain an induced subgraph of the form
  $\bullet\to\bullet\leftarrow\bullet$, which is forbidden.
\end{proof}

A canonical tree $(\overrightarrow{T},\lambda)$ with discrete $\Ro$ that
explains a connected oriented graph $\overrightarrow{G}$ w.r.t.\ $\Rtd$ has
a leaf that is attached to the root by 0-edge.

\begin{thm}
  $\overrightarrow{G}$ is explained w.r.t.\ $\Rtd$ by a rooted tree
  $(\overrightarrow{T},\lambda)$ with discrete $\Ro$ if and only if $G$ is
  an oriented forest that does not contain the 2-star oriented to center,
  $\bullet\to\bullet\leftarrow\bullet$, as an induced subgraph.
\end{thm}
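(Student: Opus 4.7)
The plan is to prove both directions separately. The forward direction requires essentially no new argument: combine Lemma~\ref{lem:cycle}, which rules out oriented cycles as subgraphs, with Lemma~\ref{lem:forb}, which rules out $\bullet\to\bullet\leftarrow\bullet$ as an induced subgraph. Together these force $\overrightarrow{G}$ to be an oriented forest without the prohibited induced subgraph.

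The substance lies in the converse direction. The first observation is structural: the forbidden induced subgraph forces every vertex to have in-degree at most one, so combined with the fact that the underlying graph is a forest, each connected component of $\overrightarrow{G}$ is an out-arborescence with a unique source. This reduces the task to (a) constructing an explaining canonical rooted tree for a single out-arborescence and (b) combining component trees, by an analogue of Lemma~\ref{lem:connected}, via a fresh common root $R$ whose connecting edges carry sufficiently large weight (e.g.\ $3$) so that no cross-component path can fulfil the $\Rtd$ condition and so that all inter-component leaf-distances remain positive.

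For a single out-arborescence with source $s$ and $\overrightarrow{G}$-children $v_1,\dots,v_m$, the plan is a recursive construction: introduce a new interior vertex $q_s$ as the local root, attach $s$ to $q_s$ by a $0$-edge, attach every sink-child $v_i$ as a leaf of $q_s$ by a $2$-edge, and for every non-sink child $v_i$ recursively build the subtree for the sub-arborescence at $v_i$ with its own local root $q_{v_i}$ and attach $q_{v_i}$ to $q_s$ by an interior $2$-edge. All interior edges then carry weight $2$, so the resulting rooted tree is canonical, and every pair of distinct leaves is at distance at least $2$, so $\Ro$ is discrete.

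The main obstacle will be verifying the correctness of this recursion: that it produces exactly the arcs of the arborescence and no others. The key invariant to maintain inductively is that at each local root $q_u$ exactly one incident edge, namely the one to $u$ itself, carries weight $0$, while all other incident edges carry weight $2$. Hence the only way for a leaf $x$ to reach weight $0$ on its upward path to some least common ancestor is for that ancestor to coincide with $q_x$; from $q_x$ a weight-$2$ downward path reaches exactly the immediate children of $q_x$, namely the $\overrightarrow{G}$-children of $x$. This simultaneously produces every intended arc, rules out spurious $\Rtd$-relations among deeper descendants and among siblings at any level, and survives unchanged under the recursive step, so the full theorem follows by induction on the size of the arborescence together with the gluing construction for multiple components.
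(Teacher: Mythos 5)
Your proof is correct and follows essentially the same route as the paper: the forward direction is exactly the combination of Lemma~\ref{lem:cycle} and Lemma~\ref{lem:forb}, and your recursive tree (an interior node $q_v$ for each non-sink vertex $v$, with $v$ attached by a $0$-edge and all remaining tree edges labelled $2$) is isomorphic to the construction the paper imports from the exactly-$1$-relation case via Lemma~\ref{lem:1tok}. You are somewhat more explicit than the paper in deriving the out-arborescence structure from the forbidden in-star and in gluing the component trees together, but the underlying argument is the same.
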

\begin{proof}
  To show the ``only if'' part, suppose $\overrightarrow{G}$ can be
  explained. Then $\overrightarrow{G}$ is oriented and by
  Lemma~\ref{lem:cycle} it is an orientation tree, and by
  Lemma~\ref{lem:forb} we know that 2-star oriented to center is forbidden.
  For the ``if'' part we use the construction employed in
  \cite{Hellmuth:17q} for $\Rl$ (with 2-edges taking the place of 1-edges):
  To each inner vertex $v$ of $\overrightarrow{G}$ a new vertex $v'$ which
  represent $v$ in tree is attached with a 0-edge, while the inner edges of
  the tree have label $2$.  The Theorem now follows directly from
  Lemma~\ref{lem:1tok}.
\end{proof}

We can relax the condition that $(\overrightarrow{T},\lambda)$ has discrete
$\Ro$. To this end, we extend the \NEW{false twin} relation $x\rthin y$ to
digraphs by setting $x\rthin y$ iff $x$ and $y$ have the same in- and
out-neighbors. The quotient graph $G/\!\rthin$ is known as the
\emph{point-determining graph} of $G$.
\begin{cor}
  An oriented graph $\overrightarrow{G}$ is explained w.r.t.\ $\Rtd$ if and
  only if $\overrightarrow{G}$ is an oriented forest whose
  point-determining graph does not contain
  $\bullet\to\bullet\leftarrow\bullet$ as an induced subgraph.
\end{cor}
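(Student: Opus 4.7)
The plan is to reduce to the discrete-$\Ro$ case handled by the preceding theorem, via the point-determining quotient $\overrightarrow{G}/\!\rthin$, mirroring the argument used in the corollary following Thm.~\ref{thm:block}. First I would establish a directed analog of Thm.~\ref{thm:thin}: namely, $\overrightarrow{G}$ is explained with respect to $\Rtd$ if and only if $\overrightarrow{G}/\!\rthin$ is. For necessity, pick one representative per $\rthin$-class and use a directed counterpart of Lemma~\ref{lem:hered} to show that the resulting induced sub-digraph on representatives (which is isomorphic to $\overrightarrow{G}/\!\rthin$) is explained by a suitable restriction of the explaining rooted tree. Since $\overrightarrow{G}/\!\rthin$ is R-thin, a directed version of Lemma~\ref{lem:Ro-thin} and Cor.~\ref{cor:Ro-thin} implies that $\Ro$ is discrete in its canonical explaining tree. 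For sufficiency, I would transcribe Alg.~\ref{alg:blow-up} to the rooted setting: for each class $R$ of size greater than one, either attach all its members as siblings of the representative $r$ (when $\lambda^{*}(qr)\neq 1$) or introduce an auxiliary interior vertex $q'$ with $\lambda(qq')=1$ and $0$-weight edges to the members of $R$ (when $\lambda^{*}(qr)=1$); the same rootedness-preserving verification as in Thm.~\ref{thm:thin} shows that the expanded rooted tree explains $\overrightarrow{G}$ with respect to $\Rtd$, with each $r'\in R$ inheriting the $\lca$-based in-/out-neighbourhood of $r$.

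Given this reduction, I would then apply the preceding theorem directly to the R-thin digraph $\overrightarrow{G}/\!\rthin$: it is explained with respect to $\Rtd$ if and only if it is an oriented forest that does not contain $\bullet\to\bullet\leftarrow\bullet$ as an induced subgraph. Combined with the directed analog of Thm.~\ref{thm:thin}, this yields that $\overrightarrow{G}$ is explained exactly when $\overrightarrow{G}/\!\rthin$ is an oriented forest without the forbidden induced subgraph. It remains to replace the condition that $\overrightarrow{G}/\!\rthin$ is an oriented forest by the equivalent condition in the corollary, namely that $\overrightarrow{G}$ itself is an oriented forest. One direction is immediate: since $\overrightarrow{G}/\!\rthin$ embeds as an induced sub-digraph of $\overrightarrow{G}$, any underlying cycle of the quotient lifts to $\overrightarrow{G}$. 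For the converse, the blow-up only adds siblings that duplicate the adjacency pattern of a class representative, so the underlying undirected graph of the expanded digraph is a forest precisely when the $\rthin$-class sizes are compatible with the quotient tree structure, which is exactly the condition that $\overrightarrow{G}$ be an oriented forest.

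The main obstacle is the careful transcription of Alg.~\ref{alg:blow-up} and Thm.~\ref{thm:thin} to the rooted setting: one must verify that rootedness and canonicity are preserved and that $\Rtd$ is correctly reproduced after blow-up, both for the zero-weight ancestral half-path from a leaf to $\lca{\cdot,\cdot}$ and for the weight-two descendant half-path. A related subtlety is the interaction between the forest condition on $\overrightarrow{G}$ and the possible sizes of the $\rthin$-classes: non-trivial classes in an oriented forest can share at most one common neighbour (otherwise a cycle would arise), and this constraint must be shown to be compatible with, and indeed produced by, the blow-up procedure. Once these technicalities are in place, the reduction to the preceding theorem delivers the stated characterisation.
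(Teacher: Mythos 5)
Your overall strategy --- reduce to the point-determining quotient via a directed analogue of Thm.~\ref{thm:thin}, then invoke the preceding theorem for the discrete-$\Ro$ case --- is exactly the route the paper takes (its own proof is a two-line sketch of this reduction). The rooted transcription of Alg.~\ref{alg:blow-up} that you outline is workable, and what it actually delivers is: $\overrightarrow{G}$ is explained w.r.t.\ $\Rtd$ if and only if $\overrightarrow{G}/\!\rthin$ is an oriented forest containing no induced $\bullet\to\bullet\leftarrow\bullet$.

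The genuine gap is your final step, where you replace the condition ``$\overrightarrow{G}/\!\rthin$ is an oriented forest'' by ``$\overrightarrow{G}$ is an oriented forest''. Your justification (``the underlying undirected graph of the expanded digraph is a forest precisely when the $\rthin$-class sizes are compatible with the quotient tree structure, which is exactly the condition that $\overrightarrow{G}$ be an oriented forest'') is circular, and the equivalence you need is in fact false. Take $V=\{z_1,x,x',z_2\}$ with arcs $z_1\to x$, $z_1\to x'$, $x\to z_2$, $x'\to z_2$. Here $x\rthin x'$, so the quotient is the directed path $z_1\to[x]\to z_2$, an oriented forest with no forbidden induced subgraph; yet the underlying graph of $\overrightarrow{G}$ is $C_4$, not a forest. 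This $\overrightarrow{G}$ \emph{is} explained w.r.t.\ $\Rtd$: take the rooted tree with root $a$, the leaf $z_1$ attached to $a$ by a $0$-edge, an interior vertex $\mu$ attached to $a$ by a $2$-edge, and the leaves $x,x'$ (via $0$-edges) and $z_2$ (via a $2$-edge) attached to $\mu$; a direct check of all $\lca$-split path weights shows the resulting arc set is exactly the one above. Hence ``explained'' does not imply ``$\overrightarrow{G}$ is an oriented forest'', and the only-if direction of the corollary as literally stated cannot be proved. The statement your reduction actually supports is the one with \emph{both} conditions (forest and forbidden subgraph) imposed on $\overrightarrow{G}/\!\rthin$ rather than on $\overrightarrow{G}$; the discrepancy is inherited from the paper's own formulation, but your write-up papers over it at precisely the point where the argument breaks, so you must either restate the target or supply a (nonexistent) proof of the false equivalence.
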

\begin{proof} 
  It suffices to note that $\Ro$-equivalent vertices are in the same
  $\rthin$-class and that there is a least resolved tree in which all
  members of a $\Ro$-class are siblings.
\end{proof}

\begin{figure}
  \begin{center}
  \includegraphics[width=0.75\textwidth]{./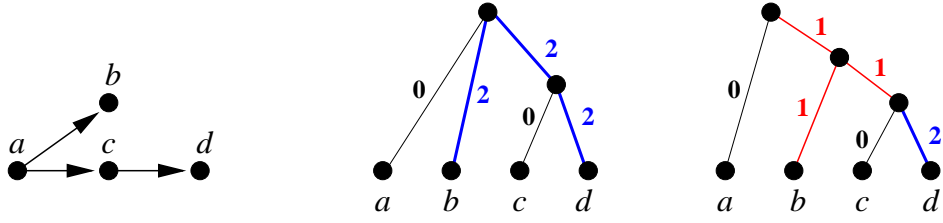}
  \end{center}
  \caption{Two distinct rooted trees with discrete $\Ro$ explaining an
    oriented graph on four vertices w.r.t.\ $\Rtd$.}
  \label{fig:directed}
\end{figure}

We note, finally, that the rooted trees with discrete $\Ro$ that explain
$\overrightarrow{G}$ w.r.t.\ $\Rtd$ are not unique, as exemplified in
Figure~\ref{fig:directed}.

\section{Concluding Remarks} 

The main result of this contribution is the characteriztion of the
exactly-$2$-relations, i.e.\ the graphs
$\textrm{nniPCG}(T,\lambda,2,2)$. They form a proper superset of the the
exact-2-leaf power graphs, which comprise only the disjoint unions of
cliques. Section~\ref{sect:general} suggests, however, that at least some
of the structure and techniques carry over to general values of
$k$. Several related problems are worth considering as well: in particular,
$\textrm{nniPCG}(T,\lambda,k,\infty)$ and $\textrm{nniPCG}(T,\lambda,1,k)$
are of interest as models for coarse grained models of evolutionary
distances.

The oriented version of the exactly-$2$-relation somewhat surprisingly, is
much more closely related to the oriented exactly-$1$-relation of
\cite{Hellmuth:17q} that to the undirected exactly-$2$-relation. There is
an alternative natural definition for a directed exactly-$2$-relation that
omit the condition that $\sum_{e\in \mathcal{P}(x,\lca{x,y})}\lambda(e)=0$.
Clearly, the resulting digraph are not oriented, i.e., they may contain
double edged. We suspect that their structure is more closely related to
the Fitch graph (directed at-least-$1$-relation) recently studied in
\cite{Geiss:17a}.

Regarding the analysis of rare-event data in phylogenetics the
characterization of the exactly-$2$-relation naturally leads to the edge
modification problem for block graphs and graphs whose R-thin quotient is a
block graph, respectively. Although these problems do not seem to have been
studied so far (see e.g.\ \cite[Tab.1]{Burzyn:06} and
\cite{Sritharan:16}). Since exactly-2-relation graphs are hereditary by
Lemma~\ref{lem:hered}, we suspect that edge modification problem for the
exactly-2-relation graphs can be handled in manner similar to closely
related edge modification problem for chordal graphs
\cite{Burzyn:06,Sritharan:16} or cluster editing \cite{Shamir:04}.

\subsection*{Acknowledgments} 
The authors gratefully acknowledge stimulating discussions with Marc
Hellmuth, Manuela Gei{\ss}, and Maribel Hern{\'a}ndez-Rosales on related
classes of \NEW{graphs} derived from labeled trees.

\bibliographystyle{elsarticle-num}
\bibliography{exact2relation}
 
\end{document}